\newdimen\AAdi%
\newbox\AAbo%
\def\AAk#1#2{\s_etbox\AAbo=\hbox{#2}\AAdi=\wd\AAbo\kern#1\AAdi{}}%
\def\AAr#1#2#3{\s_etbox\AAbo=\hbox{#2}\AAdi=\ht\AAbo\raise#1\AAdi\hbox{#3}}%
\font\tenmsb=msbm10 at 12pt \font\sevenmsb=msbm7 at 8pt
\font\fivemsb=msbm5 at 6pt
\newtheorem{theorem}{Theorem}
\newtheorem{lemma}[theorem]{Lemma}
\numberwithin{equation}{section} \numberwithin{theorem}{section}
\renewcommand{\topmargin}{0cm}
\renewcommand{\oddsidemargin}{5mm}
\renewcommand{\evensidemargin}{5mm}
\renewcommand{\textwidth}{150mm}
\renewcommand{\textheight}{230mm}
\def\R{\mathbb R}
\def\N{\mathbb N}
\def\S{\mathbb S}
\def\na{\nabla}
\def\f#1#2{\frac{#1}{#2}}
\def\a{\alpha}
\def\be{\beta}
\def\r{\Re_{I\!V}}
\def\p#1{\partial #1}
\def\de{\delta}
\def\De{\Delta}
\def\ep{\epsilon}
\def\G{\Gamma}
\def\g{\gamma}
\def\k{\kappa}
\def\la{\lambda}
\def\lan{\langle}
\def\ran{\rangle}
\def\Om{\Omega}
\def\Si{\Sigma}
\def\r{\rho}
\begin{document}

\title[A rigidity theorem on the second fundamental form for self-shrinkers]
{A rigidity theorem on the second fundamental form for self-shrinkers}

\author{Qi Ding}
\address{Shanghai Center for Mathematical Sciences, Fudan University, Shanghai 200433, China}
\email{dingqi@fudan.edu.cn}\email{dingqi09@fudan.edu.cn}

\thanks{The author would like to thank Yuanlong Xin for his interest in this work. He is supported partially by NSFC}

\date{}
\begin{abstract}
In Theorem 3.1 of \cite{DX2}, we proved a rigidity result for self-shrinkers under the integral condition on the norm of the second fundamental form.
In this paper, we relax the such bound to any finite constant (see Theorem \ref{main} for details).
\end{abstract}

\maketitle

\section{Introduction}

Self-similar solutions for mean curvature flow play a key role in the understanding the possible singularities that the flow goes through. Self-shrinkers are type I singularity models of the flow. Huisken made a pioneer work on self-shrinking solutions of the flow \cite{H1,H2}.
Colding and Minicozzi \cite{CM1} gave a comprehensive study for self-shrinking hypersurfaces and solve a long-standing conjecture raised by Huisken.

Colding-Ilmanen-Minicozzi \cite{CIM} showed that cylindrical self-shrinkers are rigid in a very strong sense. Namely, any other shrinker that is sufficiently close to one of them on a large, but compact set must itself be a round cylinder. See \cite{GZ} by Guang-Zhu for further results.
Lu Wang in \cite{W1,W2} proved strong uniqueness theorems for self-shrinkers
asymptotic to regular cones or generalized cylinders of infinite order.

For Bernstein type theorems, Ecker-Huisken \cite{EH89} and Wang \cite{W0} showed the nonexistence of nontrivial graphic self-shrinking hypersurfaces in Euclidean space. For $2\le n\le6$, Guang-Zhu showed that any smooth complete self-shrinker in $\R^{n+1}$ which is graphical inside a large, but compact, set must be a hyperplane. Ding-Xin-Yang \cite{DXY} studied the sharp rigidity theorems with the condition on Gauss map of self-shrinkers. In high codimensions, see \cite{CCY,CH,DW,DX4,HW} for more Bernstein type theorems.

Le-Sesum \cite{LS1} showed that any complete embedded self-shrinking hypersurface with polynomial volume growth must be a hyperplane provided the squared norm of the second fundamental form $|B|^2<\f12$.  Cao-Li \cite{CL} showed that any complete self-shrinker (with high codimension) with polynomial volume growth must be a generalized cylinder provided $|B|^2\le\f12$. Later, Cheng-Peng \cite{CP} removed the condition of polynomial volume growth in the case of $|B|^2<\f12$ (See \cite{DX2,CO,CW,XX} for more results on the gap theorems of the norm of the second fundamental form). In \cite{DX2}, Ding-Xin proved a rigidity result for self-shrinkers if the integration of $|B|^n$ is small. In this paper, we improve the small constant to any finite constant.

For a complete properly immersed self-shrinker $\Si^n\subset\R^{n+1}$, Ilmanen showed that there exists a cone $\mathcal{C}\subset\R^{n+1}$ with the cross section being a compact set in $\S^n$ such that $\la\Si^n\rightarrow \mathcal{C}$ as $\la\rightarrow 0_+$ locally in the Hausdorff metric on closed sets (see \cite{I2} Lecture 2, B, remark on p.8).
In \cite{So}, Song gave a simple proof by "maximum principle for self-shrinkers".
For high codimensions, with backward heat kernel (see \cite{CM1}) we show the uniqueness of tangent cones at infinity for self-shrinkers with Euclidean volume growth in the current sense with the condition on mean curvature(see Theorem \ref{uinfcone}).


$\ep$-regularity theorems for the mean curvature flow have been studied by Ecker \cite{E0,E}, Han-Sun \cite{HS}, Ilmanen \cite{I1}, Le-Sesum \cite{LS}. Now we use the one showed by Ecker \cite{E} starting from self-similar solutions, and obtain the curvature estimates for self-shrinkers, see Theorem \ref{cess}.
Combining Theorem \ref{uinfcone}, Theorem \ref{cess} and backward uniqueness for parabolic operators \cite{ESS}, we can show that self-shrinkers with finite integration on $|B|^n$ must be planes, which improves a previous rigidity theorem in \cite{DX2}. A litter more, we obtain the following Theorem.
\begin{theorem}
Let $M$ be an $n$-dimensional properly non-compact self-shrinker with compact boundary in $\R^{n+m}$, $B$ denote the second fundamental form of $M$. If
\begin{equation}\aligned\label{Bn0}
\lim_{r\rightarrow\infty}\int_{M\cap{B_{2r}\setminus B_r}}|B|^nd\mu=0,
\endaligned
\end{equation}
then $M$ must be an $n$-plane through the origin.
\end{theorem}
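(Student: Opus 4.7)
The plan is to reduce $M$ at infinity to a single $n$-plane through the origin using the tangent-cone and curvature-estimate machinery built up in the paper, and then propagate this identification down to $M$ itself by backward uniqueness for mean curvature flow. The hypothesis (\ref{Bn0}) will be used first to upgrade the $L^n$ integral decay of $|B|$ on annuli to a pointwise decay, and second (via the smoothness it provides at infinity) to verify the regularity needed to run the tangent-cone and backward-uniqueness arguments.

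First I would establish pointwise curvature decay at infinity. For $x\in M$ with $r=|x|$ large, the quantity $\int|B|^n\,d\mu$ is scale invariant, so (\ref{Bn0}) gives that on the rescaled surface $M/r$ the $L^n$ total curvature on $B_2\setminus B_{1/2}$ around $x/r$ is arbitrarily small. Ecker's $\varepsilon$-regularity curvature estimate for self-similar solutions (Theorem \ref{cess}) then yields $r|B|(x)\to 0$ as $r\to\infty$. With this pointwise decay in hand, Theorem \ref{uinfcone} supplies a unique tangent cone $\mathcal{C}$ at infinity, and the decay $r|B|\to 0$ forces $\mathcal{C}$ to be smooth away from the origin, hence a union of $n$-planes through the origin. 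A comparison using the self-shrinker equation on the graphs of the ends of $M$ over $\mathcal{C}$ shows that ends with decaying height must agree, reducing this to a single $n$-plane $P$ of multiplicity one.

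With $\mathcal{C}=P$, outside a large ball $M$ is a smooth graph of a function $u$ over $P$ with $u/|x|\to 0$, and the self-shrinker equation together with the decay of $|B|$ yields polynomial decay of $u$ and its derivatives, in the spirit of the estimates used in \cite{DX2}. I would then consider the self-similar mean curvature flow $M_t=\sqrt{-2t}\,M$ for $t\leq-1/2$, which converges locally smoothly to the static solution $P$ as $t\to-\infty$. Writing $M_t$ as a time-dependent graph $u(\cdot,t)$ over $P$ in the graphical region, the MCF becomes a quasilinear parabolic equation for $u$, with $u\to 0$ both spatially (at each fixed time) and as $t\to-\infty$. The backward uniqueness theorem of Escauriaza-Seregin-Sverak \cite{ESS}, applied to $u$ versus the zero solution, forces $u\equiv 0$. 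Hence $M=M_{-1/2}=P$ is an $n$-plane through the origin.

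The main obstacle is the backward uniqueness step. One must cast the graphical MCF into a form to which the ESS-type theorem applies, verifying that the coefficients of the linearized equation and the solution itself meet the required growth and decay hypotheses; this in turn demands quantitative control from the previous step strong enough to handle the non-compact spatial domain, typically via a truncation or exhaustion. A secondary subtlety is ruling out higher multiplicity of $\mathcal{C}$, which relies on the sharp decay $r|B|\to 0$ rather than the weaker pointwise bound $|B|\to 0$; without this, several graphical ends could collapse to $P$ and the backward uniqueness would have to be applied separately to each of them.
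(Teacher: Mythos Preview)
Your overall architecture matches the paper: use Theorem~\ref{cess} to get the scale-invariant decay $r|B|\to 0$, use Theorem~\ref{uinfcone} to obtain a unique tangent cone which the curvature decay forces to be a union of $n$-planes, then represent an end graphically and invoke the Escauriaza--Seregin--\v{S}ver\'ak backward uniqueness theorem to conclude flatness.

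There are, however, two points where your outline diverges from what the paper actually does and where your version has gaps. First, your step ``reduce $\mathcal{C}$ to a single plane $P$ of multiplicity one before applying ESS'' is not justified, and the paper does \emph{not} attempt it. Instead the paper simply picks one end of $M$ (Lemma~\ref{HpEVG} guarantees each end has Euclidean volume growth, so each end has its own planar tangent cone), writes that single end as a graph $u$ over $\R^n\setminus B_R$, and applies ESS to conclude that \emph{this end} coincides with a plane through the origin. The passage from ``one end is exactly a plane through $0$'' to ``$M$ is that plane'' is then a one-line appeal to unique continuation for the elliptic self-shrinker system (``rigidity of elliptic equations''). This ordering sidesteps entirely the multiplicity/single-plane reduction you flagged as a difficulty.

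Second, your parabolic setup is slightly off. The self-similar flow $\sqrt{-t}\,M$ converges to the tangent cone as $t\to 0^-$, not as $t\to -\infty$; the vanishing needed for ESS occurs at the singular time, not at $-\infty$. Concretely, the paper makes the substitution $U^\alpha(x,t)=\sqrt{t}\,u^\alpha(x/\sqrt{t})$ on $(\R^n\setminus B_R)\times(0,\infty)$ and computes directly from the self-shrinker equation that
\[
\left|\partial_t U^\alpha+\Delta_{\R^n}U^\alpha\right|\le \frac{c}{|x|}\sum_\beta|\nabla U^\beta|,
\]
which is precisely the differential inequality to which Theorem~1 of \cite{ESS} (in its exterior-domain form) applies. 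This explicit reduction of the static self-shrinker PDE to a backward-heat inequality is the technical heart of the step you identified as ``the main obstacle,'' and it uses only the crude bounds $|D^j u|\le c_M|x|^{1-j}$ for $j=0,1,2$ coming from $r|B|\to 0$; no finer polynomial decay of $u$ is needed.
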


\section{Preliminary}

Let $M$ be an $n$-dimensional $C^2$-submanifold in $\R^{n+m}$ with the induced metric. Let $\na$ and $\overline{\na}$ be the Levi-Civita
connections on $M$ and $\R^{n+m}$, respectively.  We define the second fundamental form $B$ of $M$ by $$B(V,W)=(\overline{\na}_VW)^N=\overline{\na}_VW-\na_VW$$
for any $V,W\in\G(TM)$, where the mean curvature vector $H$ of $M$ is given by $H=\mathrm{trace}(B)=\sum_{i=1}^nB(e_i,e_i),$ where $\{e_i\}$ is a local orthonormal frame field of $M$.

In this paper, $M^n$ is said to be a \emph{self-shrinker} in $\R^{n+m}$ if its mean curvature vector satisfies
\begin{equation}\label{0.1}
H= -\frac{X^N}{2},
\end{equation}
where $X=(x_1,\cdots,x_{n+m})\in\R^{n+m}$ is the position vector of $M$ in $\R^{n+m}$, and $(\cdots)^N$ stands for the orthogonal projection into the normal bundle $NM$.
Let $(\cdots)^T$ denote the orthogonal projection into the tangent bundle $TM$.

We define a second order differential operator $\mathcal{L}$ as in \cite{CM1} by
$$\mathcal{L}f=e^{\f{|X|^2}4}\mathrm{div}\left( e^{-\f{|X|^2}4}\na f\right)=\De f-\f12\lan X,\na f\ran$$
for any $f\in C^2(M)$. Let $\De$ be the Laplacian of $M$, then for self-shrinkers,
\begin{equation}\aligned\label{laplace}
\De|X|^2=2\lan X,\De X\ran+2|\na X|^2=2\lan X,H\ran+2n=-|X^N|^2+2n.
\endaligned
\end{equation}

In \cite{CM1}, Colding and Minicozzi defined a function $F_{X_0,t_0}$ for self-shrinking hypersurfaces in Euclidean space. Obviously, hypersurfaces can be generalized to submanifolds naturally in this definition. Set $\Phi_t\in C^\infty(\R^{n+m})$ for any $t>0$ by
$$\Phi_t(X)=\f1{(4\pi t)^{n/2}}e^{-\f{|X|^2}{4t}}.$$
For an $n$-complete submanifold $M$ in $\R^{n+m}$, we define a functional $F_t$ on $M$ by
$$F_t(M)=\int_M\Phi_td\mu=\f1{(4\pi t)^{n/2}}\int_{M}e^{-\f{|X|^2}{4t}}d\mu \quad \mathrm{for} \quad t>0,$$
where $d\mu$ is the volume element of $M$.
Sometimes, we write $F_t$ for simplicity if no ambiguous in the text.
If a self-shrinker is proper, then it is equivalent to that it has Euclidean volume growth at most by \cite{CZ} and \cite{DX}.
We shall only consider proper self-shrinkers in the following text.


Now we use the backward heat kernel to give a monotonicity formula for self-shrinkers with arbitrary codimensions, which is essentially same as self-shrinking hypersurfaces established by Colding-Minicozzi in \cite{CM1}.

\begin{lemma}\label{Vol}
For any $0<t_1\le t_2\le\infty$, each complete immersed self-shrinker $M^n$ with boundary $\p M$ (may be empty) in
$\R^{n+m}$ satisfies
\begin{equation}\aligned
F_{t_2}(M)-F_{t_1}(M)=&-\int_{t_1}^{t_2}\left(\int_{\p M}\lan X^T,\nu_{\p M}\ran\f{\Phi_s(X)}{2s}\right)ds\\
&+\int_{t_1}^{t_2} \f{1}{4s}\left(1-\f{1}s\right)\left(\int_{M}|X^N|^2\Phi_s(X)d\mu\right) ds.
\endaligned
\end{equation}
\end{lemma}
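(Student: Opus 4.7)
The plan is to compute $\partial_s F_s(M)$ by differentiating under the integral, use the self-shrinker equation to rewrite the integrand via the intrinsic Laplacian on $M$, and apply the divergence theorem to extract the boundary term. This is the arbitrary-codimension version of the monotonicity computation in \cite{CM1}, with the full normal projection $X^N$ in place of $\langle X,\nu\rangle\nu$.

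The pointwise identity I would establish on $M$ is
\[\partial_s\Phi_s = \Delta_M\Phi_s - \frac{1}{4s}\Bigl(1-\frac{1}{s}\Bigr)|X^N|^2\Phi_s.\]
Direct differentiation gives $\partial_s\Phi_s = \Phi_s\bigl(-\tfrac{n}{2s}+\tfrac{|X|^2}{4s^2}\bigr)$. On $M$, $\nabla_M\Phi_s = -\tfrac{\Phi_s}{2s}X^T$, so $\Delta_M\Phi_s = \tfrac{|X^T|^2\Phi_s}{4s^2}-\tfrac{\Phi_s}{2s}\,\mathrm{div}_M X^T$. The self-shrinker hypothesis is consumed in a single line: $\mathrm{div}_M X^T = \tfrac12\Delta_M|X|^2 = n - \tfrac12|X^N|^2$ by \eqref{laplace}. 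Substituting and using $|X|^2 = |X^T|^2 + |X^N|^2$ produces the identity above.

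Next I would integrate over $M$ and apply Stokes, converting $\int_M\Delta_M\Phi_s\,d\mu$ to $-\int_{\partial M}\tfrac{\Phi_s}{2s}\langle X^T,\nu_{\partial M}\rangle$; integrating in $s$ from $t_1$ to $t_2$ then yields the stated identity for finite $t_2$. On the non-compact $M$, Stokes and differentiation under the integral sign require an exhaustion $M\cap B_R$ with $R\to\infty$; the flux through $M\cap\partial B_R$ is bounded by $CR^{n-1}s^{-n/2-1}e^{-R^2/(4s)}\to 0$ because proper self-shrinkers have Euclidean volume growth by \cite{CZ,DX}, which likewise makes $F_s(M)$ and $\int_M|X^N|^2\Phi_s\,d\mu$ finite uniformly on compact subintervals of $s\in(0,\infty)$. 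The case $t_2=\infty$ is then a dominated-convergence limit, using that $F_s(M)$ admits a finite limit as $s\to\infty$ (again by Euclidean volume growth).

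The main obstacle is this analytic bookkeeping on the non-compact $M$; once Euclidean volume growth and the compactness of $\partial M$ are invoked, the monotonicity formula reduces to the pointwise identity above, and the self-shrinker equation is only used once via \eqref{laplace}.
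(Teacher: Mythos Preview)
Your proposal is correct and follows essentially the same route as the paper: differentiate $F_s$ in $s$, use the self-shrinker equation via \eqref{laplace} to rewrite the integrand as a divergence plus the $|X^N|^2$ term, apply the divergence theorem to pick up the boundary integral, and integrate in $s$. The only cosmetic difference is that you package the computation as the pointwise identity $\partial_s\Phi_s=\Delta_M\Phi_s-\tfrac{1}{4s}(1-\tfrac1s)|X^N|^2\Phi_s$, whereas the paper writes the same divergence term explicitly as $\mathrm{div}\bigl(e^{-|X|^2/(4t)}\nabla|X|^2\bigr)$; you are also somewhat more explicit than the paper about the exhaustion and dominated-convergence steps needed to justify Stokes and the limit $t_2\to\infty$ on the non-compact $M$.
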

\begin{proof}
We differential $F_t(M)$ with respect to $t$,
\begin{equation}\aligned\label{Ft'=}
F_t'=(4\pi)^{-\f n2} t^{-(\f n2+1)}\int_{M} \left(-\f n2+\f{|X|^2}{4t}\right)e^{-\f{|X|^2}{4t}}d\mu.
\endaligned
\end{equation}
A straightforward calculation shows (see also \cite{DX})
\begin{equation}\aligned\label{1.4}
-e^{\f{|X|^2}{4t}}\mathrm{div}\left(e^{-\f{|X|^2}{4t}}\na|X|^2\right)=&-\De|X|^2+\f1{4t}\na|X|^2\cdot\na|X|^2\\
=&-2\langle H,X\rangle-2n+\f1{t}|X^T|^2\\
=&|X^N|^2+\f{|X^T|^2}t-2n\\
=&\left(1-\f{1}t\right)|X^N|^2+\f{|X|^2}t-2n,
\endaligned
\end{equation}
where the third equality above uses the self-shrinkers' equation (\ref{0.1}).
Then
\begin{equation}\aligned\label{1.2}
F_t'=&(4\pi)^{-\f n2}t^{-(\f n2+1)}\int_{M} \bigg(-\f14\mathrm{div}\left(e^{-\f{|X|^2}{4t}}\na|X|^2\right)-\f14\left(1-\f{1}t\right)|X^N|^2e^{-\f{|X|^2}{4t}}\bigg)d\mu\\
=&\f14(4\pi)^{-\f n2}t^{-(\f n2+1)}\left(-2\int_{\p M}\lan X^T,\nu_{\p M}\ran e^{-\f{|X|^2}{4t}}-\left(1-\f{1}t\right)\int_{M}|X^N|^2e^{-\f{|X|^2}{4t}}d\mu\right)\\
=&-\f1{2t}\int_{\p M}\lan X^T,\nu_{\p M}\ran \Phi_t(X)-\f1{4t}\left(1-\f{1}t\right)\int_{M}|X^N|^2\Phi_t(X)d\mu,
\endaligned
\end{equation}
where $\nu_{\p M}$ is the normal vector of $\p M$ in $\G(TM)$.
Then we complete the proof by integration from $t_1$ to $t_2$.
\end{proof}

Denote
\begin{equation}\aligned
G_t(M)\triangleq F'_t(M)+\f1{2t}\int_{\p M}\lan X^T,\nu_{\p M}\ran \Phi_t(X)=-\f1{4t}\left(1-\f{1}t\right)\int_{M}|X^N|^2\Phi_t(X)d\mu.
\endaligned
\end{equation}
The above Lemma implies $G_t(M)\le0$ for each self-shrinker and $t\ge1$. If $\p M$ is bounded and has finite $(n-1)$-dimensional Hausdorff measure, then the limit
$$\lim_{t\rightarrow\infty}\left(\int_1^tG_s(M)ds\right)$$
always exists, and is a finite negative number. Hence, it's clear that $\lim_{t\rightarrow\infty}F_t(M)$ exists.

\section{Uniqueness of tangent cones at infinity for self-shrinkers}

For any $n$-rectifiable varifold $V\subset\R^{n+m}$ with multiplicity one, we define a functional $\Xi_t$ by
$$\Xi_t(V,f)=\f1{(4\pi t)^{n/2}}\int_{\mathrm{spt} V}f e^{-\f{|X|^2}{4t}}d\mu_V$$
for any $t>0$, where $\mu_V$ is a measure on $\R^{n+m}$ associated with the Radon measure of $V$ in $\R^{n+m}\times G(n,n+m)$.

We suppose that $M$ is a self-shrinker in $\R^{n+m}\setminus B_R$ with boundary $\p M\subset\p B_R$ for some $R\ge1$ and $\mathcal{H}^{n-1}(\p M)<\infty$.
Let $\phi\in C^1(\R^{n+m}\setminus\{0\})$ be a homogeneous function of degree zero. Namely, for any $0\neq X\in\R^{n+m}$,
$$\phi(X)=\phi(|X|\xi)=\phi(\xi)$$
with $\xi=\f{X}{|X|}$. Then
\begin{equation}\aligned
\p_{x_i}\phi=\sum_j\left(\f{\de_{ij}}{|X|}-\f{x_ix_j}{|X|^3}\right)\p_{\xi_j}\phi,
\endaligned
\end{equation}
and
\begin{equation}\aligned\label{naphi2}
|\overline{\na}\phi|^2=\sum_{j,k}\left(\f{\de_{jk}}{|X|^2}-\f{x_jx_k}{|X|^4}\right)\p_{\xi_j}\phi\p_{\xi_k}\phi\le|X|^{-2}\sum_{j}\left(\p_{\xi_j}\phi\right)^2
\triangleq|X|^{-2}|\phi|_1^2.
\endaligned
\end{equation}
Taking the derivative of $\Xi_t(M,\phi)$ on $t$ gets
\begin{equation}\aligned\label{4.3}
&\p_t\Xi_t(M,\phi)=(4\pi)^{-\f n2} t^{-(\f n2+1)}\int_{M} \left(-\f n2+\f{|X|^2}{4t}\right)\phi e^{-\f{|X|^2}{4t}}d\mu\\
=&(4\pi)^{-\f n2}t^{-(\f n2+1)}\int_{M} \left(-\f{\phi}4\mathrm{div}\left(e^{-\f{|X|^2}{4t}}\na|X|^2\right)-\f{\phi} 4\left(1-\f{1}t\right)|X^N|^2e^{-\f{|X|^2}{4t}}\right)d\mu.
\endaligned
\end{equation}
Combining $X\cdot\overline{\na}\phi=0$, we have
\begin{equation}\aligned\label{phidive|X|}
&\int_{M} -\f{\phi}4\mathrm{div}\left(e^{-\f{|X|^2}{4t}}\na|X|^2\right)d\mu\\
=&\int_{M} -\f{1}4\mathrm{div}\left(\phi e^{-\f{|X|^2}{4t}}\na|X|^2\right)d\mu+\int_{M} \f{1}4\na\phi\cdot \na|X|^2 e^{-\f{|X|^2}{4t}}d\mu\\
=&-\f12\int_{\p M}\phi\lan X^T,\nu_{\p M}\ran e^{-\f{|X|^2}{4t}}+\int_{M} \f{1}2X\cdot\na\phi e^{-\f{|X|^2}{4t}}d\mu\\
=&-\f12\int_{\p M}\phi\lan X^T,\nu_{\p M}\ran e^{-\f{R^2}{4t}}-\f12\int_MX^N\cdot\overline{\na}\phi e^{-\f{|X|^2}{4t}}d\mu.
\endaligned
\end{equation}

Set $c_R=2^{-1}(4\pi)^{-\f n2}R\cdot\mathcal{H}^{n-1}(\p M)$. Substituting \eqref{naphi2} and \eqref{phidive|X|} into \eqref{4.3} gets
\begin{equation}\aligned
&|\p_t\Xi_t(M,\phi)|\le2^{-1}(4\pi)^{-\f n2}t^{-(\f n2+1)}\bigg(\int_{M}|X^N|\cdot|\overline{\na}\phi|e^{-\f{|X|^2}{4t}}d\mu\\
&\qquad\qquad\qquad\qquad\qquad\qquad\qquad\qquad \ +|\phi|_0Re^{-\f{R^2}{4t}}\mathcal{H}^{n-1}(\p M)\bigg)+|\phi|_0|G_t(M)|\\
\le&2^{-1}(4\pi)^{-\f n2}t^{-(\f n2+1)}\int_{M}\f{|X^N|}{|X|}|\phi|_1e^{-\f{|X|^2}{4t}}d\mu+|\phi|_0\left(|G_t(M)|+c_Rt^{-(\f n2+1)}\right)\\
\le&|\phi|_0\left(|G_t(M)|+c_Rt^{-(\f n2+1)}\right)\\
&+2^{-1}(4\pi)^{-\f n2}t^{-(\f n2+1)}|\phi|_1\left(\int_{M}|X^N|^2e^{-\f{|X|^2}{4t}}d\mu\right)^{1/2}\left(\int_{M}|X|^{-2}e^{-\f{|X|^2}{4t}}d\mu\right)^{1/2}\\
\le&|\phi|_0\left(|G_t(M)|+c_Rt^{-(\f n2+1)}\right)\\
&+|\phi|_1\left|G_t(M)\right|^{1/2}\sqrt{\f{t}{t-1}}\left((4\pi)^{-\f n2}t^{-(\f n2+2)}\int_{M}|X|^{-2}e^{-\f{|X|^2}{4t}}d\mu\right)^{1/2}.
\endaligned
\end{equation}
Put $D_r=M\cap B_r$ for every $r>0$. There is a constant $c_0>0$ depending only on $M$ such that for all $r>0$
$$\int_{D_r}1d\mu<c_0r^n.$$
Note $M\subset\R^{n+m}\setminus B_R$.
Then for $n\ge2$, $t\ge R^2$, one has
\begin{equation}\aligned
t^{-\f n2}\int_{M}\f{t}{|X|^2}e^{-\f{|X|^2}{4t}}d\mu\le& t^{-\f n2}\sum_{k=-1-[\f{\log (tR^{-2})}{2\log2}]}^{\infty}\int_{D_{2^{k+1}\sqrt{t}}\setminus D_{2^{k}\sqrt{t}}}\f{t}{|X|^2}e^{-\f{|X|^2}{4t}}d\mu\\
\le&t^{-\f n2}\sum_{k=-1-[\f{\log (tR^{-2})}{2\log2}]}^{\infty}\f{1}{4^{k}}e^{-4^{k-1}}\int_{D_{2^{k+1}\sqrt{t}}\setminus D_{2^{k}\sqrt{t}}}1d\mu\\
\le&c_0\sum_{k=0}^{\infty}4^{-k}e^{-4^{k}}2^{(k+1)n}+c_0\sum_{k=-1-[\f{\log (tR^{-2})}{2\log2}]}^{-1}4^{-k}2^{(k+1)n}\\
\le&c_0\sum_{k=0}^{\infty}2^{k(n-2)+n}e^{-4^{k-1}}+c_0\sum_{k=1}^{1+[\f{\log (tR^{-2})}{2\log2}]}2^{-k(n-2)+n}\\
\le& (4\pi)^{\f n2}c_1\left(1+\log t-2\log R\right),
\endaligned
\end{equation}
where $c_1$ is a constant depending only on $n,c_0$. Therefore
\begin{equation}\aligned\label{limphi}
|\p_t\Xi_t(M,\phi)|\le&\sqrt{c_1}\f{\sqrt{1+\log t}}t|\phi|_1\left|\f{t}{t-1}G_t(M)\right|^{1/2}+|\phi|_0\left(|G_t(M)|+c_Rt^{-(\f n2+1)}\right)\\
\le&c_1\f{1+\log t}{4t(t-1)}|\phi|_1+c_Rt^{-(\f n2+1)}|\phi|_0+\left(|\phi|_0+|\phi|_1\right)|G_t(M)|.
\endaligned
\end{equation}

\begin{theorem}\label{SIcone}
Let $M$ be an $n$-dimensional self-shrinker in $\R^{n+m}$ with Euclidean volume growth and boundary $\p M\subset\p B_R$. If
\begin{equation}\aligned\label{lim|H|}
\limsup_{r\rightarrow\infty}\left(r^{1-n}\int_{M\cap B_r}|H|\right)<\infty,
\endaligned
\end{equation}
then there is a sequence $t_i\rightarrow\infty$ such that $$M_{t_i}\triangleq t_i^{-1}M=\{X\in\R^{n+m}|\ t_iX\in M\}$$
converges to a cone $C$ in $\R^{n+m}$.
\end{theorem}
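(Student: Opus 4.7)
The plan is in two stages: first extract a subsequential varifold limit of the rescalings $\{M_t=t^{-1}M\}_{t\ge 1}$, then use the monotonicity estimate \eqref{limphi} already derived to force that limit to be a cone.

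For compactness, on any fixed ball $B_\rho\subset\R^{n+m}$ the Euclidean volume growth of $M$ gives $\mu_{M_t}(B_\rho)=t^{-n}\mu_M(B_{t\rho})\le c_0\rho^n$, while rescaling the mean curvature together with the hypothesis \eqref{lim|H|} yields $\int_{M_t\cap B_\rho}|H_{M_t}|\,d\mu_{M_t}=t^{1-n}\int_{M\cap B_{t\rho}}|H|\,d\mu_M\le C\rho^{n-1}$ uniformly in large $t$. Hence $\{M_t\}$ is an equibounded family of integer $n$-rectifiable varifolds of locally bounded first variation, and Allard's compactness theorem produces a sequence $t_i\to\infty$ and an integer $n$-rectifiable varifold $C$ with $M_{t_i}\to C$ as Radon measures on $\R^{n+m}$.

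For the cone property I first check that $\lim_{t\to\infty}\Xi_t(M,\phi)$ exists for every $\phi\in C^1(\R^{n+m}\setminus\{0\})$ that is homogeneous of degree zero. Since $G_t(M)\le 0$ for $t\ge 1$ and the boundary integrand in Lemma \ref{Vol} decays like $s^{-n/2-1}$ (as $|X|=R$ on $\p M$), $\lim_{t\to\infty}F_t(M)$ exists, so $\int_1^\infty|G_t(M)|\,dt<\infty$; combined with the integrability at infinity of $(1+\log t)/(t(t-1))$ and $t^{-(n/2+1)}$ in \eqref{limphi}, this produces a finite limit $L(\phi):=\lim_{t\to\infty}\Xi_t(M,\phi)$. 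The homogeneity of $\phi$ gives the key identity $\Xi_r(M_{t_i},\phi)=\Xi_{rt_i^2}(M,\phi)$ for every $r>0$, whose right-hand side converges to $L(\phi)$; after cutting $\phi$ off near the origin (absorbing the error with the Gaussian factor), varifold convergence lets us pass to the limit on the left, giving $\Xi_r(C,\phi)=L(\phi)$ independent of $r$.

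The hard step is translating this $r$-independence into homogeneity of $\mu_C$. I would disintegrate $\mu_C$ through the radial map $X\mapsto(X/|X|,|X|)$, writing the pushforward as $\eta\otimes\nu^\rho$ with $\eta$ a Radon measure on $(0,\infty)$ and $\nu^\rho$ probability measures on $S^{n+m-1}$. The identity $\Xi_r(C,\phi)=L(\phi)$ then reads $\int_0^\infty e^{-\rho^2/(4r)}P_\phi(\rho)\,d\eta(\rho)=L(\phi)(4\pi r)^{n/2}$ with $P_\phi(\rho)=\int_{S^{n+m-1}}\phi\,d\nu^\rho$. Uniqueness of the Laplace transform (applied in the variable $u=\rho^2$) forces $d\eta=c\rho^{n-1}d\rho$ and $P_\phi$ to be independent of $\rho$; since this holds for every homogeneous $\phi$, the family $\nu^\rho$ is itself constant in $\rho$. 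A direct scaling check then yields $\mu_C(\la A)=\la^n\mu_C(A)$ for every $\la>0$, i.e., $C$ is a cone. The anticipated obstacles are making this disintegration/Laplace argument rigorous at the level of rectifiable varifolds and correctly handling the singularity of $\phi$ at the origin in the limit passage.
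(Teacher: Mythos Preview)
Your proposal is correct and tracks the paper's proof closely: both extract a subsequential varifold limit via Allard/Simon compactness, both use the scaling identity $\Xi_r(M_t,\phi)=\Xi_{rt^2}(M,\phi)$ together with the integrability in \eqref{limphi} to conclude that $\Xi_r(T,\phi)$ is independent of $r$ for every degree-zero homogeneous $\phi$, and both then invoke a Laplace-transform uniqueness principle to upgrade this to the cone property.

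The only substantive difference is in how the last step is executed. The paper reduces to the scalar quantity $V(r)=\int_{T\cap B_r}\phi\,d\mu_T$ and proves a self-contained lemma: if $\int_0^\infty e^{-r^2/t}\,dV(r)$ is a constant multiple of $t^{n/2}$ then $r^{-n}V(r)$ is constant, established by differentiating in $t$ to generate moments and using the sharp concentration of $r^pe^{-pr^2/(2r_0^2)}$ near $r=r_0$. You instead disintegrate $\mu_T$ radially and appeal to Laplace-transform uniqueness directly; your two-step use of it (first $\phi\equiv 1$ to pin down $d\eta=c\rho^{n-1}d\rho$, then general $\phi$ to force $P_\phi$ constant) is exactly right. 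The paper's version is more elementary and avoids the measure-theoretic bookkeeping you flag as an obstacle; your version is cleaner conceptually and makes the role of Laplace uniqueness transparent. Either way, once $r^{-n}\int_{T\cap B_r}\phi$ is shown constant for all such $\phi$, the cone conclusion follows as in Simon \S19.3.
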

\begin{proof}
By co-area formula,
we can choose $R'>0$ so that $\mathcal{H}^{n-1}(\p M)<\infty$ with $\p M\subset\p B_{R'}$. Denote $R'$ by $R$ for convenience. Let $M_{t}\triangleq t^{-1}M=\{X\in\R^{n+m}|\ tX\in M\}$ for any $t>0$.
Since $M$ has Euclidean volume growth and \eqref{lim|H|} holds, then by compactness of varifolds, there exists an $n$-rectifiable varifold $T$ in $\R^{n+m}$ with integer multiplicity and a sequence of $t_i$ such that $M_{t_i}=t_i^{-1}M\rightharpoonup T$ in the sense of Radon measure (See 42.7 Theorem of \cite{S} for example).

Denote $\phi$ and $\Xi_{t}(M,\phi)$ as above. Set $\mu_t$ be the volume element of $M_t$. Since
\begin{equation}\aligned
\Xi_{t^2}(M,\phi)=\f1{(4\pi t^2)^{n/2}}\int_{M}\phi e^{-\f{|X|^2}{4t^2}}d\mu=\f1{(4\pi)^{n/2}}\int_{M_t}\phi e^{-\f{|X|^2}{4}}d\mu_{t}=\Xi_1(M_t,\phi),
\endaligned
\end{equation}
then for all $R>0$
\begin{equation}\aligned
\lim_{i\rightarrow\infty}\Xi_1(M_{t_iR},\phi)=\lim_{i\rightarrow\infty}\Xi_{R^2}(M_{t_i},\phi)=\f1{(4\pi R^2)^{n/2}}\int_{T}\phi\ e^{-\f{|X|^2}{4R^2}}d\mu_{T}=\Xi_{R^2}(T,\phi).
\endaligned
\end{equation}
Note that $G_t(M)$ does not change sign for $t>1$.
Fixing $0<r<R<\infty$, from \eqref{limphi} we have
\begin{equation}\aligned
&\left|\Xi_{t_i^2r^2}(M,\phi)-\Xi_{t_i^2R^2}(M,\phi)\right|\le\int_{{t_i^2r^2}}^{{t_i^2R^2}}|\p_s\Xi_s(M,\phi)|ds\\
\le&\int_{{t_i^2r^2}}^{{t_i^2R^2}}\left(c_1\f{1+\log s}{4s(s-1)}|\phi|_1+c_R|\phi|_0s^{-(\f n2+1)}+\left(|\phi|_0+|\phi|_1\right)|G_s(M)|\right)ds\\
\le&\f{c_1}4|\phi|_1\int_{t_i^2r^2}^{\infty}\f{1+\log s}{s(s-1)}ds+\f{2}n\left(t_ir\right)^{-n-2}c_R|\phi|_0+\left(|\phi|_0+|\phi|_1\right)\left|\int_{{t_i^2r^2}}^{{t_i^2R^2}}G_s(M)ds\right|
\endaligned
\end{equation}
for all $t_i$ with $rt_i\ge2$. Since
\begin{equation}\aligned
&\left|\int_{{t_i^2r^2}}^{{t_i^2R^2}}G_s(M)ds\right|\le\left|\int_{{t_i^2r^2}}^{{t_i^2R^2}}F'_t(M)ds+\int_{{t_i^2r^2}}^{{t_i^2R^2}}\left(\f1{2s}\int_{\p M}\lan X^T,\nu_{\p M}\ran \Phi_s(X)\right)ds\right|\\
\le&\left|F_{t_i^2r^2}(M)-F_{t_i^2R^2}(M)\right|+\int_{{t_i^2r^2}}^{{t_i^2R^2}}\left(\f R{2s}\mathcal{H}^{n-1}(\p M)(4\pi s)^{-n/2} \right)ds\\
=&\left|F_{t_i^2r^2}(M)-F_{t_i^2R^2}(M)\right|+\f Rn(4\pi)^{-n/2} \mathcal{H}^{n-1}(\p M)(t_ir)^{-n}
\endaligned
\end{equation}
and $\lim_{t\rightarrow\infty}F_t$ exists, we obtain
\begin{equation}\aligned
\lim_{i\rightarrow\infty}\Xi_1(M_{t_ir},\phi)=\lim_{i\rightarrow\infty}\Xi_1(M_{t_iR},\phi)=\Xi_{R^2}(T,\phi).
\endaligned
\end{equation}
Hence
\begin{equation}\aligned
\Xi_{t}(T,\phi)=\f1{(4\pi t)^{n/2}}\int_{T}\phi e^{-\f{|X|^2}{4t}}d\mu_T
\endaligned
\end{equation}
is independent of $t\in(0,\infty)$.

Clearly,
$$0<\mathcal{H}^n(T\cap B_r)\le c_2r^n$$
for some constant $c_2>0$ and all $r>0$.
By the following lemma for $V(r)=\int_{T\cap B_r}\phi\ d\mu_T$, we conclude that
\begin{equation}\aligned\label{rnCr}
r^{-n}\int_{T\cap B_r}\phi\ d\mu_T
\endaligned
\end{equation}
is a constant independent of $r$. 
An analog argument as the proof of 19.3 in \cite{S} implies that $T$ is a cone.
\end{proof}

\begin{lemma}
Let $V(r)$ be a monotone nondecreasing continuous function on $[0,\infty)$ with $V(0)=0$ and $V(r)\le c_3r^n$ for some constant $c_3>0$. If the quantity
\begin{equation}\aligned
\f1{(4\pi t)^{n/2}}\int_0^{\infty} e^{-\f{r^2}{4t}}dV(r)
\endaligned
\end{equation}
is a constant for any $t>0$, then $r^{-n}V(r)$ is a constant.
\end{lemma}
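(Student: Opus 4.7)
The strategy is to interpret the hypothesis as a Laplace transform identity and apply injectivity of the Laplace transform.

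First, I would rescale by setting $s=\f{1}{4t}$. Since $(4\pi t)^{-n/2}=(s/\pi)^{n/2}$, the hypothesis rewrites as
\begin{equation*}
\int_0^\infty e^{-sr^2}\,dV(r)=C_0\,s^{-n/2},\qquad s>0,
\end{equation*}
for some constant $C_0>0$ (both sides are finite, since $V$ continuous and nondecreasing with $V(0)=0$ makes $dV$ a positive Borel measure on $[0,\infty)$, and the bound $V(r)\le c_3 r^n$ controls the left side). Then, since $V(0)=0$ and $V(r)e^{-sr^2}\to 0$ as $r\to\infty$, integrating by parts kills the boundary terms and gives $\int_0^\infty 2sr\,e^{-sr^2}V(r)\,dr=C_0\,s^{-n/2}$. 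Dividing by $s$ and substituting $u=r^2$ converts this into the genuine Laplace-transform identity
\begin{equation*}
\int_0^\infty e^{-su}V(\sqrt u)\,du=C_0\,s^{-n/2-1},\qquad s>0.
\end{equation*}

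Next, I would identify the Laplace transform with that of a power. The standard formula $\int_0^\infty e^{-su}u^{n/2}\,du=\Gamma(n/2+1)s^{-n/2-1}$ shows that the ansatz $V_*(r):=\a r^n$ with $\a:=C_0/\Gamma(n/2+1)$ produces exactly the same right-hand side. Hence the continuous function $g(u):=V(\sqrt u)-\a u^{n/2}$ satisfies $g(0)=0$, $|g(u)|\le(c_3+\a)u^{n/2}$, and has vanishing Laplace transform on $(0,\infty)$. By injectivity of the Laplace transform on continuous functions of polynomial growth on $[0,\infty)$, we conclude $g\equiv 0$, i.e.\ $V(r)=\a r^n$, so that $r^{-n}V(r)\equiv\a$ is constant, as desired.

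The only nontrivial ingredient is the Laplace-transform uniqueness. If one wants a bare-hands argument, setting $s=n+k$ for integers $k\ge1$ and substituting $v=e^{-u}$ turns the vanishing of $\mathcal L[g]$ into the moment conditions $\int_0^1 v^{k-1}h(v)\,dv=0$ for $h(v):=v^n g(-\log v)$; the bounds on $g$ together with $g(0)=0$ ensure that $h$ extends continuously to $[0,1]$, and the Weierstrass approximation theorem then forces $h\equiv 0$, hence $g\equiv 0$. All remaining verifications (convergence of integrals, vanishing of boundary terms, Fubini for the integration by parts) are routine consequences of the polynomial bound $V(r)\le c_3 r^n$.
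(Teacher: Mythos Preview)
Your proof is correct and takes a genuinely different route from the paper's.

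Both arguments begin the same way: integrate by parts to reduce to $\int_0^\infty (V(r)-\kappa_1 r^n)\,r\,e^{-r^2/t}\,dr=0$ for all $t>0$. From here the paper argues by contradiction: assuming $V(r_0)>\kappa_1 r_0^n$ on some interval, it differentiates in $t$ to generate the higher moments $\int_0^\infty (V(r)-\kappa_1 r^n)\,r^{2k+1}e^{-r^2/t}\,dr=0$, then chooses $t=t_p=2r_0^2/p$ so that $r^p e^{-r^2/t_p}$ peaks at $r_0$ and carries out a Laplace-method asymptotic estimate to force the integral to be strictly positive for large $p$. You instead change variables $u=r^2$ to land on an honest Laplace transform, and then prove injectivity by the substitution $v=e^{-u}$, which converts the vanishing of $\mathcal L[g]$ at the integers $s=n+k$ into the full set of polynomial moment conditions on $[0,1]$, so Weierstrass approximation finishes. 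Your argument is cleaner and avoids the delicate asymptotics; the paper's argument is more self-contained in that it never leaves the original $(r,t)$ variables and makes the localization at $r_0$ explicit, at the cost of a page of estimates. Both are valid; yours is the more standard and efficient proof of what is, at bottom, a Laplace/Stieltjes uniqueness statement.
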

\begin{proof}
There are constants $\k_0,\k_1>0$ such that for all $t>0$
\begin{equation}\aligned
\int_0^{\infty} e^{-\f{r^2}{t}}dV(r)=\k_0 t^{n/2}=\k_1\int_0^{\infty} e^{-\f{r^2}{t}}dr^n,
\endaligned
\end{equation}
namely,
\begin{equation}\aligned
\int_0^{\infty} e^{-\f{r^2}{t}}d\left(V(r)-\k_1r^n\right)=0.
\endaligned
\end{equation}
Integrating by parts implies
\begin{equation}\aligned\label{Vk1}
\int_0^{\infty} \left(V(r)-\k_1r^n\right)re^{-\f{r^2}{t}}dr=0.
\endaligned
\end{equation}
Suppose that there is a constant $r_0>0$ such that $V(r_0)-\k_1r_0^n>0$ (Or else we complete the proof by \eqref{Vk1}). Then there is a $0<\de<\f{r_0}2$ and $\ep>0$ such that $V(r)-\k_1r^n\ge\ep$ for all $r\in(r_0-\de,r_0+\de)$.
Set $t_p=\f2pr_0^2$, then in $(0,\infty)$ the function
$$r^pe^{-\f{r^2}{t_p}}$$
attains its maximal value at $r=r_0$.

Now we claim
\begin{equation}\aligned\label{largeP}
\lim_{p\rightarrow\infty}\f{p^{\f12}e^{\f p2}}{r_0^{p+1}}\int_{r_0-\de}^{r_0+\de}r^pe^{-\f{r^2}{t_p}}dr=\int_{-\infty}^{\infty}e^{-t^2}dt=\sqrt{\pi}.
\endaligned
\end{equation}
In fact,
\begin{equation}\aligned
I(p)\triangleq&\f{p^{\f12}e^{\f p2}}{r_0^{p+1}}\int_{r_0-\de}^{r_0+\de}r^pe^{-\f{r^2}{t_p}}dr
=p^{\f12}e^{\f p2}\int_{-\f{\de}{r_0}}^{\f{\de}{r_0}}(1+s)^pe^{-\f p2(1+s)^2}ds\\
=&\int_{-\f{\de}{r_0}\sqrt{p}}^{\f{\de}{r_0}\sqrt{p}}\left(1+\f t{\sqrt{p}}\right)^pe^{-\f p2\left(\f{2t}{\sqrt{p}}+\f{t^2}p\right)}dt\\
=&\int_{-\f{\de}{r_0}\sqrt{p}}^{\f{\de}{r_0}\sqrt{p}}e^{p\log\left(1+\f t{\sqrt{p}}\right)}e^{-\sqrt{p}t-\f{t^2}2}dt.
\endaligned
\end{equation}
When $-\f12\le s<\infty$, a simple calculation implies
$$\min\left\{0,\f83s^3\right\}\le\log(1+s)-s+\f{s^2}2\le\f{s^3}3.$$
Combining the above inequality, we get
\begin{equation}\aligned
&\limsup_{p\rightarrow\infty}I(p)\le\limsup_{p\rightarrow\infty}\int_{-\f{\de}{r_0}\sqrt{p}}^{\f{\de}{r_0}\sqrt{p}}
e^{-t^2+\f{t^3}{3\sqrt{p}}}dt\\
=&\lim_{p\rightarrow\infty}\int_{-\f{\de}{r_0}\sqrt{p}}^{\f{\de}{r_0}\sqrt{p}}
e^{-t^2(1-\f t{3\sqrt{p}})}dt=\int_{-\infty}^{\infty}e^{-t^2}dt,
\endaligned
\end{equation}
and
\begin{equation}\aligned
\liminf_{p\rightarrow\infty}I(p)\ge&\lim_{p\rightarrow\infty}\int_{0}^{\f{\de}{r_0}\sqrt{p}}
e^{-t^2}dt+\liminf_{p\rightarrow\infty}\int_{-\f{\de}{r_0}\sqrt{p}}^{0}
e^{-t^2+\f{8t^3}{3\sqrt{p}}}dt\\
=&\int_{0}^{\infty}e^{-t^2}dt+\lim_{p\rightarrow\infty}\int_{-\f{\de}{r_0}\sqrt{p}}^{0}
e^{-t^2\left(1-\f{8t}{3\sqrt{p}}\right)}dt=\int_{-\infty}^{\infty}e^{-t^2}dt.
\endaligned
\end{equation}
Hence we have shown \eqref{largeP}.

For $p>1$,
\begin{equation}\aligned
&\f{p^{\f12}e^{\f p2}}{r_0^{p+1}}\int^{\infty}_{r_0+\de}r^{n+p}e^{-\f{r^2}{t_p}}dr=r_0^n\int_{\f{\de}{r_0}\sqrt{p}}^{\infty}e^{(n+p)\log\left(1+\f t{\sqrt{p}}\right)}e^{-\sqrt{p}t-\f{t^2}2}dt\\
\le&r_0^n\int_{\f{\de}{r_0}\sqrt{p}}^{\infty}e^{(n+p)\f t{\sqrt{p}}}e^{-\sqrt{p}t-\f{t^2}2}dt\le r_0^n\int_{\f{\de}{r_0}\sqrt{p}}^{\infty}e^{\f n{\sqrt{p}}t-\f{t^2}2}dt.
\endaligned
\end{equation}
Then
\begin{equation}\aligned\label{pVrk1tp}
&\liminf_{p\rightarrow\infty}\f{p^{\f12}e^{\f p2}}{r_0^{p+1}}\int_{0}^{\infty}\left(V(r)-\k_1r^n\right)r^pe^{-\f{r^2}{t_p}}dr\\
\ge&\liminf_{p\rightarrow\infty}\f{p^{\f12}e^{\f p2}}{r_0^{p+1}}\left(\ep\int_{r_0-\de}^{r_0+\de}r^pe^{-\f{r^2}{t_p}}dr-\k_1\int_{0}^{r_0-\de}r^{n+p}e^{-\f{r^2}{t_p}}dr
-\k_1\int^{\infty}_{r_0+\de}r^{n+p}e^{-\f{r^2}{t_p}}dr\right)\\
\ge&\ep\sqrt{\pi}-\k_1r_0^n\limsup_{p\rightarrow\infty}\left(\f{p^{\f12}e^{\f p2}}{r_0^{p+1}}\int_{0}^{r_0-\de}r^{p}e^{-\f{pr^2}{2r_0^2}}dr+\int^{\infty}_{\f{\de}{r_0}\sqrt{p}}e^{\f n{\sqrt{p}}t-\f{t^2}2}dr\right)\\
=&\ep\sqrt{\pi}-\k_1r_0^n\limsup_{p\rightarrow\infty}\left(\int^{-\f{\de}{r_0}\sqrt{p}}_{-\sqrt{p}}e^{p\log\left(1+\f t{\sqrt{p}}\right)}e^{-\sqrt{p}t-\f{t^2}2}dt+\int^{\infty}_{\f{\de}{r_0}\sqrt{p}}e^{-t^2\left(\f12-\f n{\sqrt{p}t}\right)}dr\right)\\
\ge&\ep\sqrt{\pi}-\k_1r_0^n\limsup_{p\rightarrow\infty}\left(\int^{-\f{\de}{r_0}\sqrt{p}}_{-\sqrt{p}}e^{\sqrt{p}t}e^{-\sqrt{p}t-\f{t^2}2}dt\right)=\ep\sqrt{\pi}.
\endaligned
\end{equation}
Taking the derivative of $t$ in \eqref{Vk1} yields
\begin{equation}\aligned
\int_{0}^{\infty}\left(V(r)-\k_1r^n\right)r^{2k+1}e^{-\f{r^2}{t}}dr=0
\endaligned
\end{equation}
for any $t>0$ and $k=0,1,2\cdots$. If we choose $p=2k+1$, $r_0^2>e$, $t_p=\f2pr_0^2$ in \eqref{pVrk1tp}, then we get the contradiction provided $k$ is sufficiently large. Hence $V(r)-\k_1r^n\equiv0$.
\end{proof}

\begin{theorem}\label{uinfcone}
Let $M$ be an $n$ dimensional smooth self-shrinker with Euclidean volume growth and boundary $\p M\subset\p B_R$ in $\R^{n+m}$. If \eqref{lim|H|} holds, then the limit
$\lim_{r\rightarrow\infty}r^{-1}M$ exists and is cone, namely, the tangent cone at infinity of $M$ is a unique cone.
\end{theorem}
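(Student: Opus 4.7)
The plan is to upgrade the subsequential varifold convergence established in Theorem~\ref{SIcone} to a full limit. Concretely, I will show that $\lim_{t\to\infty}\Xi_1(M_t,\phi)$ exists for every $C^1$ function $\phi$ on $\R^{n+m}\setminus\{0\}$ that is homogeneous of degree zero, and then use the cone structure of any subsequential tangent cone to extract its sphere cross-section uniquely from this limit.

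The first step is to integrate the pointwise estimate \eqref{limphi} from some $T_0>1$ to $\infty$. The three terms on the right-hand side are separately integrable on $[T_0,\infty)$: the factor $\frac{1+\log t}{t(t-1)}$ decays like $\frac{\log t}{t^2}$; the factor $t^{-(n/2+1)}$ is integrable for $n\ge 2$; and $|G_t(M)|=-G_t(M)$ has finite integral by the remark following Lemma~\ref{Vol}, which guarantees that $\int_1^\infty G_s(M)\,ds$ converges. Hence $\Xi_t(M,\phi)$ is Cauchy as $t\to\infty$, and via the rescaling identity $\Xi_{t^2}(M,\phi)=\Xi_1(M_t,\phi)$ the quantity
\[
\Xi_\infty(\phi):=\lim_{t\to\infty}\Xi_1(M_t,\phi)
\]
is well defined.

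Next, suppose two divergent sequences $s_j,u_j\to\infty$ produce varifold limits $M_{s_j}\rightharpoonup T_1$ and $M_{u_j}\rightharpoonup T_2$, which are cones by Theorem~\ref{SIcone}. Passing to the limit inside $\Xi_1$ yields $\Xi_1(T_1,\phi)=\Xi_\infty(\phi)=\Xi_1(T_2,\phi)$. Since each $T_i$ is a cone, polar decomposition $d\mu_{T_i}=r^{n-1}dr\,d\mu_{T_i\cap\S^{n+m-1}}$ combined with the radial invariance of $\phi$ gives
\[
\Xi_1(T_i,\phi)=\frac{1}{(4\pi)^{n/2}}\Big(\int_0^\infty r^{n-1}e^{-r^2/4}\,dr\Big)\int_{T_i\cap\S^{n+m-1}}\phi\,d\mu_{T_i\cap\S}.
\]
The prefactor is a positive universal constant, so the sphere cross-section measures of $T_1$ and $T_2$ integrate every homogeneous-of-degree-zero $C^1$ test function identically; by density in $C^0(\S^{n+m-1})$ the two measures coincide, whence $T_1=T_2$. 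Thus every subsequential tangent cone equals a common cone $C$, and the full limit $\lim_{r\to\infty}r^{-1}M=C$ exists.

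The main technical subtlety I anticipate is justifying the passage to the varifold limit $\Xi_1(M_{s_j},\phi)\to\Xi_1(T_i,\phi)$: varifold convergence is only guaranteed against compactly supported continuous functions, whereas $\phi$ may be discontinuous at the origin and the Gaussian is not compactly supported. One truncates to an annulus $B_\Lambda\setminus B_\delta$; the inner contribution is bounded by $|\phi|_0\,\mathcal{H}^n(M_{s_j}\cap B_\delta)\le|\phi|_0 c_0\delta^n$ using Euclidean volume growth (noting $M_{s_j}\subset\R^{n+m}\setminus B_{R/s_j}$, so no real issue near $0$ once $\delta>R/s_j$), while the outer tail is controlled by Gaussian decay against Euclidean volume growth uniformly in $j$. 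With these truncations the compactly supported varifold convergence suffices to conclude.
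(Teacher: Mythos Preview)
Your proposal is correct and follows essentially the same approach as the paper: both use the integrability of the right-hand side of \eqref{limphi} to show $\lim_{t\to\infty}\Xi_t(M,\phi)$ exists, then compare two subsequential cone limits via their Gaussian integrals and conclude equality of cross-sections by testing against all homogeneous degree-zero $\phi$. The paper additionally proves the intermediate claim that $\lim_{r\to\infty}r^{-n}\int_{M\cap B_r}\phi\,d\mu$ exists, but as you observed this is not needed for the uniqueness conclusion; your route via $\Xi_1$ alone is the more direct version of the same argument, and your explicit discussion of the annular truncation needed to pass varifold convergence through the non-compactly-supported, origin-singular integrand fills in a point the paper leaves implicit.
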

\begin{proof}
We claim
\begin{equation}\aligned\label{Drphi}
\lim_{r\rightarrow\infty}\left(r^{-n}\int_{M\cap B_r}\phi d\mu\right)
\endaligned
\end{equation}
exists for every homogeneous function $\phi\in C^1(\R^{n+m}\setminus\{0\})$ with degree zero.
Suppose
\begin{equation}\aligned
\limsup_{r\rightarrow\infty}r^{-n}\int_{M\cap B_r}\phi d\mu>\liminf_{r\rightarrow\infty}r^{-n}\int_{M\cap B_r}\phi d\mu
\endaligned
\end{equation}
for some homogeneous function $\phi\in C^1(\R^{n+m}\setminus\{0\})$ with degree zero. Then there exist two sequences $p_i\rightarrow\infty$ and $q_i\rightarrow\infty$ such that
\begin{equation}\aligned
\lim_{i\rightarrow\infty}p_i^{-n}\int_{M\cap B_{p_i}}\phi d\mu>\lim_{i\rightarrow\infty}q_i^{-n}\int_{M\cap B_{q_i}}\phi d\mu.
\endaligned
\end{equation}
By compactness of varifolds and Theorem \ref{SIcone}, there exist two cones $C_1,C_2$ in $\R^{n+m}$ with integer multiplicities and subsequences $p_{k_i}$ of $p_i$ and $q_{k_i}$ of $q_i$ such that $M_{p_{k_i}}\rightharpoonup C_1$ and $M_{q_{k_i}}\rightharpoonup C_2$ in the sense of Radon measure. So we have
\begin{equation}\aligned
\int_{C_1\cap B_1}\phi d\mu_{C_1}=&\lim_{i\rightarrow\infty}\int_{M_{p_{k_i}}\cap B_1}\phi d\mu_{p_{k_i}}=
\lim_{i\rightarrow\infty}p_{k_i}^{-n}\int_{M\cap B_{p_{k_i}}}\phi d\mu\\
>&\lim_{i\rightarrow\infty}q_{k_i}^{-n}\int_{M\cap B_{q_{k_i}}}\phi d\mu=\lim_{i\rightarrow\infty}\int_{M_{q_{k_i}}\cap B_1}\phi d\mu_{q_{k_i}}\\
=&\int_{C_2\cap B_1}\phi d\mu_{C_2},
\endaligned
\end{equation}
which implies
\begin{equation}\aligned\label{Cpphi}
\int_{C_1}\phi e^{-\f{|X|^2}4} d\mu_{C_1}>\int_{C_2}\phi e^{-\f{|X|^2}4} d\mu_{C_2}
\endaligned
\end{equation}
by co-area formula.

From the previous argument, the limit
\begin{equation}\aligned
\lim_{t\rightarrow\infty}\Xi_{t}(M,\phi)=\lim_{t\rightarrow\infty}\f1{(4\pi t)^{n/2}}\int_{M}\phi e^{-\f{|X|^2}{4t}}d\mu
\endaligned
\end{equation}
exists. It infers that
\begin{equation}\aligned\label{CpCq}
\int_{C_1}\phi e^{-\f{|X|^2}4} d\mu_{C_1}=&\lim_{i\rightarrow\infty}\int_{M_{p_{k_i}}}\phi e^{-\f{|X|^2}4}=\lim_{t\rightarrow\infty}\f1{ t^{n/2}}\int_{M}\phi e^{-\f{|X|^2}{4t}}d\mu\\
=&\lim_{i\rightarrow\infty}\int_{M_{q_{k_i}}}\phi e^{-\f{|X|^2}4}=\int_{C_2}\phi e^{-\f{|X|^2}4} d\mu_{C_2}.
\endaligned
\end{equation}
However, \eqref{CpCq} contradicts \eqref{Cpphi}. Hence, the claim \eqref{Drphi} holds.

If $\lim_{i\rightarrow\infty}r_i^{-1}M\rightharpoonup C^+$, $\lim_{i\rightarrow\infty}s_i^{-1}M\rightharpoonup C^-$ and $C^+\neq C^-$ are cones, then from \eqref{CpCq} one has
\begin{equation}\aligned
\int_{C^+}\phi e^{-\f{|X|^2}4} d\mu_{C^+}=\int_{C^-}\phi e^{-\f{|X|^2}4} d\mu_{C^-}
\endaligned
\end{equation}
for every homogeneous function $\phi\in C^1(\R^{n+m}\setminus\{0\})$ with degree zero. It's clear that
\begin{equation}\aligned
\int_{C^+\cap \p B_1}\phi=\int_{C^-\cap \p B_1}\phi.
\endaligned
\end{equation}
Arbitrariness of $\phi$ implies $C^+=C^-$. Therefore, the tangent cone at infinity of $M$ is a unique cone.
\end{proof}


\section{A rigidity theorem for self-shrinkers}


Let us recall an $\ep$-regularity theorem for mean curvature flow showed by Ecker (A litter different from Theorem 1.8 in \cite{E}).
\begin{theorem}\label{epreg}
For $p\in[n,n+2]$, there exists a constant $\ep_0>0$ such that for any smooth properly immersed solution $\mathcal{M}=(\mathcal{M}_t)_{t\in(-4,0)}$ of mean curvature flow in $\R^{n+m}$, every $X_0$ which the solution reaches at time $t_0\in[-1,0)$, the assumption
\begin{equation}\aligned\label{DefIx0t0}
I_{X_0,t_0}\triangleq\sup_{\sqrt{-t_0}\le\r<\r'\le2}\f1{\left(\r'^2-\r^2\right)^{\f{n+2-p}2}}\int_{-\r'^2}^{-\r^2}\int_{\mathcal{M}_t\cap B_{2}(X_0)}|B|^p\le\ep_0
\endaligned
\end{equation}
implies
\begin{equation}\aligned
\sup_{\sigma\in[0,1]}\left(\sigma^2\sup_{t\in(t_0-(1-\sigma)^2,t_0)}\ \sup_{\mathcal{M}_t\cap B_{1-\sigma}(X_0)}|B|^2\right)\le\left(\ep_0^{-1}I_{X_0,t_0}\right)^{\f2p}.
\endaligned
\end{equation}
\end{theorem}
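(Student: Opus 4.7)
The strategy follows the standard point-picking plus parabolic Moser iteration template used by Ecker. First, I would introduce the scale-invariant auxiliary function
\[
g(\sigma)=\sigma^2\sup_{t\in(t_0-(1-\sigma)^2,\,t_0)}\ \sup_{\mathcal{M}_t\cap B_{1-\sigma}(X_0)}|B|^2,\qquad \sigma\in[0,1].
\]
Since $g$ is continuous with $g(1)=0$, it attains a maximum $C_\ast=g(\sigma_\ast)$ at some $\sigma_\ast\in[0,1)$. Arguing by contradiction, suppose $C_\ast>(\epsilon_0^{-1}I_{X_0,t_0})^{2/p}$. I then select $(Y_\ast,s_\ast)$ with $Y_\ast\in\mathcal{M}_{s_\ast}\cap B_{1-\sigma_\ast}(X_0)$ realising $|B|^2(Y_\ast,s_\ast)=\sigma_\ast^{-2}C_\ast$, and set $\lambda=|B|(Y_\ast,s_\ast)$.

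Next I would parabolically rescale about this maximising point: let $\tilde{\mathcal{M}}_t=\lambda\bigl(\mathcal{M}_{\lambda^{-2}t+s_\ast}-Y_\ast\bigr)$, which is again a smooth mean curvature flow with $|\tilde B|(0,0)=1$. Because $\sigma_\ast$ was chosen maximal, the standard doubling-radius argument shows $|\tilde B|\le 2$ on the parabolic cylinder $P_r=B_r(0)\times(-r^2,0)$ with $r\sim\lambda\sigma_\ast/2$; if $\epsilon_0$ is taken small, $C_\ast$ is large, so $r$ is as large as we wish. The Morrey-type supremum in the definition of $I_{X_0,t_0}$ is tailored precisely so that the rescaled integral inherits the bound
\[
\int_{P_r}|\tilde B|^p \;\le\; C\,I_{X_0,t_0}\;\le\; C\epsilon_0
\]
uniformly in $r$, the prefactor $(\rho'^2-\rho^2)^{-(n+2-p)/2}$ supplying exactly the correct scaling.

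The third step is a parabolic Moser iteration applied to the Simons-type evolution inequality $(\partial_t-\Delta)|\tilde B|^2\le -2|\nabla\tilde B|^2+c_1|\tilde B|^4$, which is valid in arbitrary codimension. The a priori bound $|\tilde B|\le 2$ on $P_r$ tames the reaction term, and a standard Moser/De Giorgi scheme yields
\[
\sup_{P_{r/2}}|\tilde B|^2 \;\le\; C\Bigl(\int_{P_r}|\tilde B|^p\Bigr)^{2/p}\;\le\; C(C\epsilon_0)^{2/p}.
\]
Choosing $\epsilon_0$ small then contradicts $|\tilde B|(0,0)=1$, forcing $C_\ast\le(\epsilon_0^{-1}I_{X_0,t_0})^{2/p}$, which is exactly the claimed estimate.

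The hard part is the borderline exponent $p=n$, where the naive parabolic Sobolev embedding is critical and does not by itself close the iteration. The remedy, due to Ecker, is either to invoke Huisken's monotonicity formula on the rescaled flow to produce the missing Gaussian/logarithmic room, or to exploit the Morrey supremum in \eqref{DefIx0t0} — the weight $(\rho'^2-\rho^2)^{-(n+2-p)/2}$ provides the precise extra scale freedom that makes the iteration converge uniformly across the whole range $p\in[n,n+2]$. A secondary technical point is to verify that the rescaled cylinder produced by the point-picking really has radius comparable to $\lambda\sigma_\ast$ (using that $(Y_\ast,s_\ast)$ lies inside the smaller cylinder associated with $\sigma_\ast$ rather than on its boundary), which is standard once the doubling trick is set up carefully.
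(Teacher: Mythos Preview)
Your strategy is correct and is the same point-picking/rescaling template the paper uses; the differences are in execution, and they make the argument shorter than you anticipate.

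First, the paper rescales not by the full curvature scale but by $\lambda=\lambda_1\,(\ep_0^{-1}I_{X_0,t_0})^{1/p}$ where $\lambda_1=|B|^{-1}(X_1,t_1)$, so that after rescaling $|\tilde B|(0,0)=(\ep_0^{-1}I_{X_0,t_0})^{1/p}\le1$ rather than $=1$, and the contradiction hypothesis reduces to $\lambda<\sigma_1$, which immediately furnishes a \emph{unit}-scale parabolic cylinder on which $|\tilde B|\le2$.

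Second, and this is the point that dissolves your worry about the borderline exponent, once $|\tilde B|\le2$ the evolution inequality becomes
\[
\Big(\f{d}{ds}-\De_{\tilde M_s}\Big)|\tilde B|^p\le \f{3p}2|\tilde B|^{p+2}\le 6p\,|\tilde B|^p,
\]
so $|\tilde B|^p$ is a nonnegative subsolution of a \emph{linear} parabolic inequality with bounded zeroth-order coefficient. The paper then invokes Ecker's parabolic mean value inequality directly, obtaining $|\tilde B|^p(0,0)\le c(n)\int_{-1/4}^{0}\int_{\tilde M_s\cap B_{1/2}}|\tilde B|^p$. No Moser iteration, no Sobolev embedding, hence no critical-exponent difficulty at $p=n$; your last paragraph's proposed remedies (Huisken monotonicity, extra Morrey room) are not needed. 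The Morrey weight in $I_{X_0,t_0}$ is used exactly once: the specific choice $\r^2=-t_1$, $\r'^2-\r^2=2\lambda^2$ converts the original supremum into $I_{X_0,t_0}\ge 2^{-(n+2-p)/2}\int_{-1/4}^{0}\int_{\tilde M_s\cap B_{1/2}}|\tilde B|^p$, and combining the two displays gives $\ep_0^{-1}I_{X_0,t_0}\le c(n)2^{(n+2-p)/2}I_{X_0,t_0}$, the desired contradiction for $\ep_0$ small.
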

For completeness, we give the proof in appendix which is based on Ecker's proof.
Let us consider the mean curvature flow in Theorem \ref{epreg} which starts from a self-shrinker. Let $M$ be a self shrinker, then the one-parameter family $\mathcal{M}_t=\sqrt{-t}M$ is a mean curvature flow for $-4\le t<0$.
In this case,
\begin{equation}\aligned\label{r'rBp}
I_{X_0,t_0}=&\sup_{\sqrt{-t_0}\le\r<\r'\le2}\left(\r'^2-\r^2\right)^{-\f{n+2-p}2}\int_{-\r'^2}^{-\r^2}\left(\int_{\sqrt{-t}M\cap B_{2}(X_0)}|B|^p\right)dt\\
=&\sup_{\sqrt{-t_0}\le\r<\r'\le2}\left(\r'^2-\r^2\right)^{-\f{n+2-p}2}\int_{\f1{\r'}}^{\f1{\r}}\left(\int_{\f1rM\cap B_{2}(X_0)}|B|^p\right)\f2{r^3}dr\\
=&\sup_{\sqrt{-t_0}\le\r<\r'\le2}2\left(\r'^2-\r^2\right)^{-\f{n+2-p}2}\int_{\f1{\r'}}^{\f1{\r}}\left(r^{p-n-3}\int_{M\cap B_{2r}(rX_0)}|B|^pd\mu\right)dr.
\endaligned
\end{equation}
For any $-\f14<t_0<0$ and $X_0\in \sqrt{-t_0}M$, $I_{X_0,t_0}\le\ep_0$ implies
\begin{equation}\aligned
\f14\sup_{t\in(t_0-\f14,t_0)}\ \sup_{\sqrt{-t}M\cap B_{\f12}(X_0)}|B|^2\le\left(\ep_0^{-1}I_{X_0,t_0}\right)^{\f2p}.
\endaligned
\end{equation}
Hence
\begin{equation}\aligned\label{CESS}
\sup_{t\in(2,(-t_0)^{-1/2})}\left(\sup_{\f1tM\cap B_{\f12}(X_0)}|B|^2\right)\le 4\left(\ep_0^{-1}I_{X_0,t_0}\right)^{\f2p}.
\endaligned
\end{equation}

Now we have the following curvature estimates for self-shrinkers.
\begin{theorem}\label{cess}
Let $M$ be an $n$ dimensional proper self-shrinker in $\R^{n+m}$. If for some $p\in[n,n+2)$ there is
\begin{equation}\aligned\label{|B|p}
\lim_{R\rightarrow\infty}\int_{M\cap{B_{2R}\setminus B_R}}|B|^pd\mu=0,
\endaligned
\end{equation}
then there exist constants $c,r_0>0$ such that for all $r\ge r_0$ and $t>4$ we have
\begin{equation}\aligned\label{Brt}
\sup_{M\cap\p B_{(r+1)t}}|B|\le \f ct\left(\sup_{s\ge r}\int_{M\cap{B_{2s}\setminus B_s}}|B|^pd\mu\right)^{\f1p}.
\endaligned
\end{equation}
\end{theorem}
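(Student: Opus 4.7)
The strategy is to apply Ecker's $\ep$-regularity theorem (Theorem~\ref{epreg}) to the self-similar mean curvature flow $\mathcal{M}_\tau=\sqrt{-\tau}M$ at the parabolic point that corresponds, after rescaling, to a chosen point $P\in M\cap\p B_{(r+1)t}$. Concretely I would set $t_0=-1/t^2\in(-1/4,0)$ (valid since $t>4$) and $Y=P/t\in(1/t)M=\sqrt{-t_0}M$, so that $|Y|=r+1$. Provided $I_{Y,t_0}\le\ep_0$, the pointwise estimate \eqref{CESS} together with the scaling rule $|B_M|(P)=t^{-1}|B_{(1/t)M}|(Y)$ immediately yields
$$|B|(P)\le \f{2}{t}\bigl(\ep_0^{-1}I_{Y,t_0}\bigr)^{1/p}.$$
Thus the whole theorem reduces to dominating $I_{Y,t_0}$ by the annular quantity $\phi(R):=\sup_{s\ge R}\int_{M\cap B_{2s}\setminus B_s}|B|^pd\mu$, which tends to zero by \eqref{|B|p}.

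For that domination I would work directly from the representation \eqref{r'rBp}, whose inner integrand is $\int_{M\cap B_{2\sigma}(\sigma Y)}|B|^pd\mu$ with $\sigma\in[1/\rho',1/\rho]\subset[1/2,t]$. The decisive geometric observation is that $\sigma Y$ has norm $\sigma(r+1)$, so
$$B_{2\sigma}(\sigma Y)\subset B_{\sigma(r+3)}\setminus B_{\sigma(r-1)};$$
for $r\ge 5$ the doubling inequality $\sigma(r+3)\le 2\sigma(r-1)$ holds, giving $B_{2\sigma}(\sigma Y)\subset B_{2(r-1)\sigma}\setminus B_{(r-1)\sigma}$. Hence, by monotonicity of $\phi$,
$$\int_{M\cap B_{2\sigma}(\sigma Y)}|B|^p d\mu\le \phi\bigl((r-1)\sigma\bigr)\le \phi\bigl((r-1)/2\bigr),$$
a bound uniform in $\sigma\ge 1/2$.

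The rest is routine algebra. The elementary integral $\int_{1/\rho'}^{1/\rho}\sigma^{p-n-3}d\sigma=(\rho'^{\,n+2-p}-\rho^{n+2-p})/(n+2-p)$ is legitimate because $p<n+2$, and the subadditivity inequality $(a+b)^\alpha\le a^\alpha+b^\alpha$ for $\alpha=(n+2-p)/2\in(0,1]$, applied to $a=\rho'^2-\rho^2$ and $b=\rho^2$, gives $\rho'^{\,n+2-p}-\rho^{n+2-p}\le(\rho'^2-\rho^2)^{(n+2-p)/2}$, which cancels the singular denominator in \eqref{r'rBp}. Assembling the pieces produces $I_{Y,t_0}\le \f{2}{n+2-p}\phi((r-1)/2)$. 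Choosing $r_0$ large (using $\phi\to 0$) makes this $\le\ep_0$ whenever $r\ge r_0$, so Ecker's theorem applies and we obtain $|B|(P)\le (c/t)\phi((r-1)/2)^{1/p}$. A harmless reparametrization $r\mapsto 2r+1$, $t\mapsto t/2$ (with a corresponding enlargement of $r_0$ and $c$) then recasts this as \eqref{Brt}.

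The main obstacle, in my view, is the coordination of the various scalings: locating the correct flow-time $t_0$, identifying the rescaled point $Y$, ensuring that every ball $B_{2\sigma}(\sigma Y)$ appearing in the definition of $I_{Y,t_0}$ sits in an annular region that is both uniformly away from the origin and of doubling type (which forces $r\ge 5$), and retaining enough smallness from $\phi$ to validate Ecker's hypothesis $I_{Y,t_0}\le\ep_0$. The subcriticality condition $p<n+2$ is crucial: at $p=n+2$ the integral $\int\sigma^{p-n-3}d\sigma$ produces a logarithm that destroys the clean concavity bound, and the smallness of $\phi$ could not be transferred to $I_{Y,t_0}$.
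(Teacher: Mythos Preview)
Your proposal is correct and follows essentially the same route as the paper: both arguments apply Ecker's $\ep$-regularity (Theorem~\ref{epreg}) to the self-similar flow at a rescaled point $Y$ with $|Y|$ of order $r$, observe that every ball $B_{2\sigma}(\sigma Y)$ appearing in \eqref{r'rBp} lies in a dyadic annulus (the paper requires $|X|\ge 2r_1+2$, you require $r\ge 5$, which amount to the same thing), bound $I_{Y,t_0}$ by $\frac{2}{n+2-p}\phi(\cdot)$, and finish with a reparametrization in $r$ and $t$. The only cosmetic difference is that the paper bounds the ratio $(\rho'^{\,n+2-p}-\rho^{n+2-p})/(\rho'^2-\rho^2)^{(n+2-p)/2}$ via the monotonicity of $s\mapsto (s^{2\alpha}-1)/(s^2-1)^\alpha$, whereas you invoke the equivalent subadditivity $(a+b)^\alpha\le a^\alpha+b^\alpha$.
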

\begin{proof}
For any $\ep>0$, there exists a constant $r_0\ge2$ such that for any $r_1\ge r_0$ we have
$$\sup_{r\ge r_1}\int_{M\cap{B_{2r}\setminus B_r}}|B|^pd\mu<\ep.$$
For any vector $X_0\in\R^{n+m}$ with $|X_0|\ge 2r_1+2$, it's clear that 
$$B_{2r}(rX_0)\subset \left(B_{(|X_0|+2)r}\setminus B_{(|X_0|-2)r}\right)\subset \left(B_{2(|X_0|-2)r}\setminus B_{(|X_0|-2)r}\right).$$
Let $X\in\sqrt{-t}M$ with $|X|\ge 2r_1+2$ and $t<0$, then
\begin{equation}\aligned
\int_{M\cap B_{2r}(rX)}|B|^pd\mu\le\int_{M\cap \left(B_{2(|X|-2)r}\setminus B_{(|X|-2)r}\right)}|B|^pd\mu\le\sup_{s\ge r_1}\int_{M\cap{B_{2s}\setminus B_s}}|B|^pd\mu<\ep.
\endaligned
\end{equation}
In view of \eqref{r'rBp}, one has
\begin{equation}\aligned
&I_{X,t}\le\sup_{0\le\r<\r'\le2}\left(\r'^2-\r^2\right)^{-\f{n+2-p}2}\int_{\f1{\r'}}^{\f1{\r}}2r^{p-n-3}dr\cdot\sup_{r\ge r_1}\int_{M\cap{B_{2r}\setminus B_r}}|B|^pd\mu\\
\le&\f{2}{2+n-p}\sup_{0\le\r<\r'\le2}\left(\r'^2-\r^2\right)^{-\f{n+2-p}2}\left(\r'^{2+n-p}-\r^{2+n-p}\right)\sup_{r\ge r_1}\int_{M\cap{B_{2r}\setminus B_r}}|B|^pd\mu.
\endaligned
\end{equation}
Since for each fixed $\a\in(0,1]$ and each $s\ge1$,
\begin{equation}\aligned
\f{\p}{\p s}\left(\f{s^{2\a}-1}{\left(s^2-1\right)^{\a}}\right)=2\a\f{s-s^{2\a-1}}{\left(s^2-1\right)^{\a}}\ge0,
\endaligned
\end{equation}
then
$$\sup_{s\ge1}\f{s^{2\a}-1}{\left(s^2-1\right)^{\a}}=\lim_{s\rightarrow\infty}\f{s^{2\a}-1}{\left(s^2-1\right)^{\a}}=1.$$
So we obtain
\begin{equation}\aligned
I_{X,t}\le\f{2}{2+n-p}\sup_{r\ge r_1}\int_{M\cap{B_{2r}\setminus B_r}}|B|^pd\mu<\f{2\ep}{2+n-p}.
\endaligned
\end{equation}
Let $\ep=\f{2+n-p}2\ep_0$, $|X|\ge 2r_1+2$ and $-\f14<t<0$, then combining \eqref{CESS} we have
\begin{equation}\aligned
\sup_{s\in(2,(-t)^{-1/2})}\left(\sup_{\f1sM\cap B_{\f12}(X)}|B|\right)\le 2\left(\ep^{-1}\sup_{r\ge r_1}\int_{M\cap{B_{2r}\setminus B_r}}|B|^pd\mu\right)^{\f1p},
\endaligned
\end{equation}
which implies
\begin{equation}\aligned
2\left(\ep^{-1}\sup_{r\ge r_1}\int_{M\cap{B_{2r}\setminus B_r}}|B|^pd\mu\right)^{\f1p}\ge&\sup_{X\in\f1tM\cap\p B_{2r_1+2}}\left(\sup_{\f1tM\cap B_{\f12}(X)}|B|\right)\\
=&\sup_{|X|=2r_1+2,tX\in M}\left(t\sup_{M\cap B_{\f t2}(tX)}|B|\right)\\
\ge&t\sup_{M\cap\p B_{2t(r_1+1)}}|B|
\endaligned
\end{equation}
for any $r\ge r_1$ and $t>2$.
This suffices to complete the proof.
\end{proof}

\begin{lemma}\label{HpEVG}
Let $M$ be an $n$ dimensional proper noncompact self-shrinker in $\R^{n+m}$ with
\begin{equation}\aligned\label{Hpinfty}
\limsup_{r\rightarrow\infty}\int_{M\cap B_{2r}\setminus B_r}|H|^pd\mu<\infty
\endaligned
\end{equation}
for some $p\ge2$. Then every end of $M$ has Euclidean volume growth at least.
\end{lemma}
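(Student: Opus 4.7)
The plan is to derive an almost‐monotonicity for $V_E(r) = |E\cap B_r|$, where $E$ is a fixed end of $M$, by combining the first variation formula for the position vector $X$ with the self‐shrinker identity $\langle X, H\rangle = -|X^N|^2/2 = -2|H|^2$. Applying the divergence theorem to $X$ on $E\cap B_r$ (using $\mathrm{div}_M X = n$ and the fact that the outer unit conormal to $E\cap\partial B_r$ in $M$ is $X^T/|X^T|$) yields
$$nV_E(r) = 2\int_{E\cap B_r}|H|^2\,d\mu + \int_{E\cap \partial B_r}|X^T|\,d\mathcal H^{n-1} + B_0,$$
where $B_0$ is a uniformly bounded contribution coming from the compact inner boundary of $E$ (fixed once $r$ exceeds the radius containing this boundary). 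Since $|X^T|\le r$ on $E\cap \partial B_r$ and $V'_E(r)\ge \mathcal H^{n-1}(E\cap\partial B_r)$ by the co‐area formula, this rearranges to
$$\Big(\frac{V_E(r)}{r^n}\Big)' \ge -\frac{2}{r^{n+1}}\int_{E\cap B_r}|H|^2\,d\mu - \frac{B_0}{r^{n+1}}.$$

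To control the right‐hand side I would use the $L^p$ hypothesis. Dyadic telescoping of $\int_{B_{2r}\setminus B_r}|H|^p\,d\mu\le C$ gives $\int_{E\cap B_s}|H|^p\,d\mu\le C\log s$, and combining with H\"older and the upper Euclidean volume growth $V_E(s)\le Cs^n$ (automatic from properness, as recalled in the preliminaries) yields
$$\int_{E\cap B_s}|H|^2\,d\mu \le C(\log s)^{2/p}\, s^{n(1-2/p)}.$$
A Fubini computation then bounds $\int_{r_0}^{\infty}s^{-n-1}\int_{E\cap B_s}|H|^2\,d\mu\,ds$ by $O\!\bigl(r_0^{-2n/p}(\log r_0)^{2/p}\bigr)$, which tends to $0$ as $r_0\to\infty$. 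Integrating the differential inequality therefore gives $V_E(R)/R^n \ge V_E(r_0)/r_0^n - \varepsilon(r_0)$ with $\varepsilon(r_0)\to 0$; taking $\liminf$ in $R$ and $\limsup$ in $r_0$ shows that $L := \lim_{R\to\infty}V_E(R)/R^n$ exists in $[0,\infty)$ and that $V_E(r_0)/r_0^n\le L+\varepsilon(r_0)$ for every $r_0$.

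The principal obstacle is then to show $L>0$. I would approach this by a bootstrap: starting from a linear lower bound $V_E(r)\ge cr$ coming from covering $E\cap B_r$ by disjoint unit balls centered at a sequence of smooth points of $E$ escaping to infinity (each contributing volume $\ge c>0$ by local density at a smooth point), one feeds successive lower bounds $V_E(r)\ge c r^{\alpha}$ back into the identity of Step~1 to force $\int|H|^2/V_E\to 0$, hence $V'_E\ge (n-o(1))V_E/r$, which raises the exponent $\alpha$ progressively toward $n$. The technical subtlety is that the per‐step improvement depends on $p$; for $p$ at the edges of the range one may have to supplement the volume iteration with pointwise curvature decay coming from the $\varepsilon$-regularity (Theorem~\ref{epreg}) and the curvature estimate (Theorem~\ref{cess}) to close the argument and conclude $L>0$.
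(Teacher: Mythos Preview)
Your monotonicity setup and the H\"older/dyadic control of $\int|H|^2$ match the paper's approach exactly (compare \eqref{prVs}--\eqref{421V}); the paper likewise reduces to showing the limiting density is positive, starting from a linear lower volume bound on the end.

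There are two places where your argument is incomplete. First, the linear lower bound: ``each unit ball contributes volume $\ge c>0$ by local density at a smooth point'' is not justified. Local density only controls $|E\cap B_\rho(p)|$ for \emph{small} $\rho$; without a curvature bound there is no uniform lower bound on unit balls as the center escapes to infinity (for a self-shrinker $|H|\le|X|/2$ grows, so the usual monotonicity deteriorates at scale $1$). The paper does not argue this from scratch but cites \cite{LW,MW} for the fact that every end of a proper self-shrinker has at least linear volume growth; you should do the same.

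Second, once $V_E(r)\ge cr$ is available, no iterative bootstrap and no $\varepsilon$-regularity are needed. Using H\"older with the actual $V_E$ (rather than the upper Euclidean bound) gives
\[
\frac{1}{V_E(r)}\int_{E\cap B_r}|H|^2 \;\le\; \Big(\int_{E\cap B_r}|H|^p\Big)^{2/p}V_E(r)^{-2/p}\;\le\; C(\log r)^{2/p}r^{-2/p}\longrightarrow 0,
\]
so your first-variation identity yields $(\log V_E)'\ge n/r - C(\log r)^{2/p}r^{-1-2/p}$, whose error term is integrable on $[r_0,\infty)$; one integration gives $V_E(R)/R^n\ge c'>0$ directly, for every $p\ge 2$. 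The paper implements the same mechanism via the weighted quantity $r^{-n+s}V_E(r)$ in two stages ($s=\de>0$ then $s=0$), but the content is identical. Finally, Theorem~\ref{cess} cannot be invoked here in any case: its hypothesis is on $|B|^p$, whereas the present lemma only assumes control on $|H|^p$.
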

\begin{proof}
For any end $E$ of $M$, there is a constant $r_0>0$ such that $\p E\subset B_{r_0}$. Replacing $E$ by $E\setminus B_{r_0}$ if necessary, we have $\p E\subset\p B_{r_0}$. Set $E_r=E\cap B_r$.
For $0\le s<1$ and $r\ge r_0$, we have
\begin{equation}\aligned
&\f{\p}{\p r}\left(r^{-n+s}\int_{E_r}1d\mu\right)=-(n-s) r^{-n+s-1}\int_{E_r}1d\mu+r^{-n+s}\int_{E\cap\p B_r}\f{|X|}{|X^T|}\\
\ge&-(n-s) r^{-n+s-1}\int_{E_r}1d\mu+r^{-n+s-1}\int_{E\cap\p B_r}|X^T|\\
=&-(n-s) r^{-n+s-1}\int_{E_r}1d\mu+\f12r^{-n+s-1}\int_{E_r}\De|X|^2+r^{-n+s-1}\int_{\p E}|X^T|\\
\ge&s r^{-n+s-1}\int_{E_r}1d\mu-2 r^{-n+s-1}\int_{E_r}|H|^2d\mu\\
\ge&s r^{-n+s-1}\int_{E_r}1d\mu-2r^{-n+s-1}\left(\int_{E_r}|H|^p d\mu\right)^{\f2p}\left(\int_{E_r}1d\mu\right)^{1-\f2p}.
\endaligned
\end{equation}
Set
$$\widetilde{V}_s(r)=r^{-n+s}\int_{E_r}1d\mu,$$
then
\begin{equation}\aligned\label{prVs}
\p_r\widetilde{V}_s\ge&\f sr\widetilde{V}_s-2r^{-\f 2p(n-s)-1}\widetilde{V}_s^{1-\f2p}\left(\int_{E_r}|H|^p d\mu\right)^{\f2p}\\
=&\f {\widetilde{V}_s}r\left(s-2\left(\int_{E_r}|H|^p d\mu\right)^{\f2p}\left(\int_{E_r}1d\mu\right)^{-\f2p}\right).\\
\endaligned
\end{equation}

For any $r>0$, let $q\in\N$ with $2^q\le r<2^{q+1}$. By \eqref{Hpinfty}, there is a constant $c>0$ such that
\begin{equation}\aligned
\int_{E_r}|H|^pd\mu\le\sum_{k=0}^q\int_{E_{2^{k+1}}\setminus E_{2^k}}|H|^pd\mu+\int_{E_1}|H|^pd\mu\le c(q+2)\le c\left(\f{\log r}{\log 2}+2\right).
\endaligned
\end{equation}
From \cite{LW,MW}, every end of any self-shrinker has linear growth at least.
For any $\de>0$, there exists a constant $r_{\de}>0$ such that for all $r\ge r_{\de}$
$$\left(\int_{E_r}|H|^p d\mu\right)^{\f2p}\left(\int_{E_r}1d\mu\right)^{-\f2p}\le\f{\de}4,$$
then \eqref{prVs} implies
\begin{equation}\aligned
\p_r\widetilde{V}_{\de}\ge\f{\de\widetilde{V}_\de}{2r}.\\
\endaligned
\end{equation}
By Newton-Leibniz formula,
\begin{equation}\aligned\label{Vder}
\log\widetilde{V}_{\de}(r)\ge \log\widetilde{V}_{\de}(r_\de)+\int_{r_\de}^r\f{\p_s\widetilde{V}_{\de}(s)}{\widetilde{V}_{\de}(s)}ds\ge \log\widetilde{V}_{\de}(r_\de)+\f{\de}2\log\f r{r_\de}.
\endaligned
\end{equation}

Denote $\widetilde{V}(r)=\widetilde{V}_0(r)$. By \eqref{prVs},
\begin{equation}\aligned\label{prVf2pHp}
\p_r\widetilde{V}^{\f2p}\ge-\f4p\left(\int_{E_r}|H|^p d\mu\right)^{\f2p}\ r^{-\f{2n}p-1}.\\
\endaligned
\end{equation}
There is a constant $s_0>e$ such that for all $s\ge s_0$ the inequality $\log s<s^{\f np}$ holds.
Hence combining \eqref{Hpinfty} and \eqref{prVf2pHp}, for any $r_2\ge r_1\ge\max\{s_0,r_0\}$ we have
\begin{equation}\aligned\label{421V}
\widetilde{V}^{\f2p}(r_2)-\widetilde{V}^{\f2p}(r_1)\ge-\f {nc'} p\int_{r_1}^{r_2} r^{-\f{2n}p-1}\log rdr\ge-\f {nc'} p\int_{r_1}^{r_2} r^{-\f{n}p-1}dr\ge-c' r_1^{-\f{n}p}
\endaligned
\end{equation}
for some constant $c'>0$.
\eqref{Vder} infers
$$\lim_{r\rightarrow\infty}r^{\de}\widetilde{V}(r)=\infty$$
for any $\de>0$.
Combining \eqref{421V}, we obtain
\begin{equation}\aligned
\widetilde{V}^{\f2p}(r_2)\ge\f12\widetilde{V}^{\f2p}(r_1)>0
\endaligned
\end{equation}
for some fixed sufficiently large $r_1\ge\max\{s_0,r_0\}$.
This suffices to complete the proof.
\end{proof}



Now let us prove the following rigidity theorem.
\begin{theorem}\label{main}
Let $M$ be an $n$-dimensional properly non-compact self-shrinker with compact boundary in $\R^{n+m}$. If
\begin{equation}\aligned\label{Bn0}
\lim_{r\rightarrow\infty}\int_{M\cap{B_{2r}\setminus B_r}}|B|^nd\mu=0,
\endaligned
\end{equation}
then $M$ must be an $n$-plane through the origin.
\end{theorem}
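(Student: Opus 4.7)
\emph{Step 1: Setup and verification of hypotheses.} The assumption (\ref{Bn0}) immediately gives $\int_M|B|^n\,d\mu<\infty$, and since $|H|\le\sqrt n\,|B|$ the annular integrals of $|H|^n$ also tend to $0$. Lemma \ref{HpEVG} with $p=n$ therefore shows that each end of $M$ has at least Euclidean volume growth, while properness together with \cite{CZ,DX} provides the matching upper bound. H\"older's inequality then yields
$r^{1-n}\int_{M\cap B_r}|H|\,d\mu\le c\,\bigl(\int_M|B|^n\bigr)^{1/n}$,
so hypothesis (\ref{lim|H|}) of Theorem \ref{uinfcone} is satisfied. Consequently the tangent cone at infinity $C=\lim_{r\to\infty}r^{-1}M$ exists and is a unique $n$-rectifiable cone in $\R^{n+m}$.

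\emph{Step 2: $C$ is a flat plane.} Apply Theorem \ref{cess} with $p=n$ to get the decay
$\sup_{M\cap\partial B_R}|B|\le cR^{-1}\epsilon(R)^{1/n}$ with $\epsilon(R)\to 0$. Under rescaling $X\mapsto r^{-1}X$ this gives $|B|_{r^{-1}M}\to 0$ uniformly on every compact subset of $\R^{n+m}\setminus\{0\}$; higher derivative bounds follow from the standard Ecker-Huisken type interior estimates coming with Theorem \ref{epreg}. Thus $C\setminus\{0\}$ is a smooth, totally geodesic cone, hence a union of $n$-planes through the origin, and the matching Euclidean volume growth forces unit density, so $C$ is a single $n$-plane $P$ through the origin.

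\emph{Step 3: Backward uniqueness collapses $M$ onto $P$.} Consider the self-similar mean curvature flow $\mathcal{M}_t=\sqrt{-t}\,M$ on $t\in[-1,0)$, whose moving boundary $\sqrt{-t}\,\partial M$ lies in $\partial B_{R\sqrt{-t}}$ and thus shrinks to the origin. Rescaling (\ref{Brt}) shows that on any fixed annulus $A=B_{R'}\setminus B_{R''}$ with $R''>0$ the second fundamental form of $\mathcal{M}_t\cap A$, together with all higher derivatives, tends to $0$ as $t\to 0^-$. Consequently, for $t$ close to $0$, $\mathcal{M}_t\cap A$ is a smooth graph of a function $u(\cdot,t)$ over $P\cap A$ with $u\to 0$ smoothly. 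The MCF graph equation is quasilinear parabolic, and because $|\nabla u|$ and $|B|$ both vanish in the limit, its coefficients are uniformly close to those of the linear heat operator. The Escauriaza-Seregin-\v{S}ver\'ak backward uniqueness theorem \cite{ESS} therefore forces $u\equiv 0$ on $A\times[-1,0)$, so $\mathcal{M}_t\cap A = P\cap A$ for all $t\in[-1,0)$. In particular $M=\mathcal{M}_{-1}$ agrees with $P$ on $A$; letting $R''\downarrow 0$ and $R'\uparrow\infty$, we conclude $M=P$.

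\emph{Main obstacle.} The decisive difficulty lies in Step 3, namely in applying ESS on the punctured parabolic domain $(\R^{n+m}\setminus\{0\})\times[-1,0]$ in which the origin is simultaneously a spatial singularity of each $\mathcal{M}_t$ and the limit of the shrinking boundaries, with the singular time $t=0$ corresponding to the ``blowdown'' limit. It is precisely the strong curvature decay produced by Theorem \ref{cess} that makes the argument go through, since it supplies both the smooth convergence $\mathcal{M}_t\to P$ on every fixed annulus and the uniform smallness of $|\nabla u|$ and $|B|$ required to reduce the graphical MCF operator to a linear parabolic operator with bounded coefficients, as needed by ESS.
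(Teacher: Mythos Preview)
Your overall strategy coincides with the paper's---curvature decay from Theorem \ref{cess}, uniqueness of the tangent cone from Theorem \ref{uinfcone}, flatness of that cone, and ESS backward uniqueness---but there are genuine gaps in the execution.

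In Step 1, (\ref{Bn0}) does \emph{not} give $\int_M|B|^n\,d\mu<\infty$: vanishing of the dyadic annular integrals does not force summability (take $\int_{B_{2^{k+1}}\setminus B_{2^k}}|B|^n\sim 1/k$). Your H\"older bound then only yields $r^{1-n}\int_{M\cap B_r}|H|\le c(\log r)^{1/n}$, so (\ref{lim|H|}) is not verified. The fix is to invoke Theorem \ref{cess} \emph{first}; the decay $r\sup_{M\cap\partial B_r}|B|\to 0$ gives $|H|\le c/|X|$ outside a compact set, and then (\ref{lim|H|}) follows from the Euclidean volume upper bound.

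In Step 2, ``matching Euclidean volume growth forces unit density'' is unjustified: nothing so far rules out several ends, in which case $C$ is a union of several planes. The paper accordingly only concludes that $C$ is a finite union of $n$-planes through the origin and then works end by end.

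In Step 3, ESS backward uniqueness is stated on an \emph{exterior} cylinder $(\R^n\setminus B_R)\times(0,T)$; on a bounded annulus it fails without lateral boundary data, which you do not control. The paper's reduction is cleaner: write one end of $M$ as a graph $u:\R^n\setminus B_R\to\R^m$ over a limit plane and set $U^\a(x,t)=\sqrt{t}\,u^\a(x/\sqrt{t})$ on $(\R^n\setminus B_R)\times(0,1]$; the self-shrinker equation together with the decay of $Du,D^2u$ gives
\[
\bigl|\partial_tU^\a+\De_{\R^n}U^\a\bigr|\le \f{c}{|x|}\sum_\be|\na_{\R^n}U^\be|,
\]
and $U(\cdot,0)=0$ is precisely the smooth blowdown convergence. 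ESS on this exterior domain yields $U\equiv 0$, so the end is planar; finally all of $M$ is planar by unique continuation for the elliptic self-shrinker system---a step your limiting procedure $R''\downarrow 0$, $R'\uparrow\infty$ does not replace.
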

\begin{proof}
From Theorem \ref{cess}, we obtain
\begin{equation}\aligned\label{r|B|r}
\lim_{r\rightarrow\infty}\left(r\sup_{B_{5r}}|B|\right)=0.
\endaligned
\end{equation}
Let $M_r=r^{-1}M$ for any $r>0$, then $M_t\cap\left(B_K\setminus B_{\f1K}\right)$ for any $K>0$ has bounded sectional curvature. On the one hand, $M_r\cap\left(B_K\setminus B_{\f1K}\right)$ converges to a smooth manifold with $C^{1,\a}$ metric in the Gromov-Hausdorff sense. On the other hand, Theorem \ref{uinfcone} implies that $M_r$ converges to a unique cone $C$ in $\R^{n+1}$ in the current sense. Hence for any $x\in C\setminus\{0\}$, there is a neighborhood $\Om_x$ of $x$ such that $\Om_x\cap C$ can be represented as a graph with $C^{1,\a}$ graphic function. Hence by Fatou lemma, $\Om_x\cap C$ is flat by \eqref{r|B|r}. So we conclude that $M_r$ converges to a union of finite $n$-planes through origin as $r\rightarrow\infty$.
Note that every end of $M$ converges to a union of finite $n$-planes through origin by Lemma \ref{HpEVG}.
Therefore, up to rotation there are a constant $R>0$ and a smooth graph graph$_u\subset M$ over $\R^n\setminus B_R$ with the graphic function $u=(u^1,\cdots,u^m)$. Moreover, there is a constant $c_M$ such that
\begin{equation}\aligned\label{Decayu}
|D^ju^\a(x)|\le c_M|x|^{-j+1}
\endaligned
\end{equation}
on $\R^n\setminus B_R$ for any $j=0,1,2$ and $1\le \a\le m$. Here, $c_M$ is a general constant, which may change from line to line.

Let $g_{ij}=\de_{ij}+\sum_{1\le\a\le m}u^\a_iu^\a_j$ and $(g^{ij})$ be the inverse matrix of $(g_{ij})$. 
From the equation of self-shrinkers(see \cite{DW} for instance)
\begin{equation}\aligned
\sum_{1\le i,j\le n}g^{ij}u^\a_{ij}=\f{-u^\a+x\cdot Du^\a}2,
\endaligned
\end{equation}
we have
\begin{equation}\aligned
\De_{M}u^\a=&\f1{\sqrt{\mathrm{det}g_{ij}}}\p_{x_i}\left(g^{kl}\sqrt{\mathrm{det}g_{ij}}u^\a_j\right)\\
=&\f1{\sqrt{\mathrm{det}g_{ij}}}\p_{x_i}\left(g^{ij}\sqrt{\mathrm{det}g_{kl}}\right)u^\a_j+\f12x\cdot Du^\a-\f{u^\a}2.
\endaligned
\end{equation}
Denote $g^{ij}_t(x)=g^{ij}(x,t)=g^{ij}\left(\f{x}{\sqrt{t}}\right)$, then
\begin{equation}\aligned
\left|\de_{ij}-g^{ij}_t\right|\le c_1\sum_\be|\na_{\R^n}u^\be|,
\endaligned
\end{equation}
where $c_1$ is a constant.
Let $Q(x,t,Du^\be,D^2u^\g)=\f1{\sqrt{t}}\left(\de_{ij}-g^{ij}_t\right)u^\a_{ij}\big|_{\f{x}{\sqrt{t}}}$, then on $(\R^{n}\setminus B_R)\times\R^+$, from \eqref{Decayu} one has
\begin{equation}\aligned\label{4.27}
|Q(x,t,Du^\be,D^2u^\g)|\le \f{c_2}{|x|}\sum_\be|\na_{\R^n}u^\be|,
\endaligned
\end{equation}
where $c_2$ is a constant.

Denote $a^{ij}(x,t)=a^{ij}_0\left(\f{x}{\sqrt{t}}\right)$ and $U^\a(x,t)=\sqrt{t}u^\a\left(\f{x}{\sqrt{t}}\right)$. Then
\begin{equation}\aligned
&\f{\p}{\p t}U^\a+\De_{\R^n}U^\a=\f1{2\sqrt{t}}u^\a\left(\f{x}{\sqrt{t}}\right)-\f12Du^\a\left(\f{x}{\sqrt{t}}\right)\cdot\f{x}{t}
+\f1{\sqrt{t}}\De_{\R^n}u^\a\bigg|_{\f{x}{\sqrt{t}}}\\
=&-\f1{\sqrt{t}}g^{ij}_tu^\a_{ij}+\f1{\sqrt{t}}\De_{\R^n}u^\a\bigg|_{\f{x}{\sqrt{t}}}=Q(x,t,Du^\be,D^2u^\g).
\endaligned
\end{equation}
Hence for any $(x,t)\in \left(\R^{n}\setminus B_R\right)\times\R^+$, combining \eqref{4.27} we have
\begin{equation}\aligned
\left|\f{\p}{\p t}U^\a+\De_{\R^n}U^\a\right|\le\f{c_2}{|x|}\sum_\be|\na_{\R^n}U^\be|.
\endaligned
\end{equation}
Due to Theorem 1 (with the version of vector-valued functions) showed by Escauriaza-Seregin-$\mathrm{\check{S}}$ver$\mathrm{\acute{a}}$k in \cite{ESS} (see the following content in Theorem 1 of \cite{ESS}), we obtain
$$U^\a\equiv0\qquad \mathrm{on}\ \ \R^{n}\setminus B_R,$$
and then graph$_u$ is an $n$-plane through the origin. Hence $M$ is an $n$-plane through the origin by the rigidity of elliptic equations, and then we complete the proof.
\end{proof}

\section{Appendix}

Let us prove Theorem \ref{epreg}.
There exist $\sigma_1\in(0,1)$, $t_1\in[t_0-(1-\sigma_1)^2,t_0]$ and $X_1\in\mathcal{M}_{t_1}\cap \overline{B}_{1-\sigma_1}(X_0)$ such that
$$\sigma_1^2|B|^2\Big|_{(X_1,t_1)}=\sup_{\sigma\in[0,1]}\left(\sigma^2\sup_{t\in(t_0-(1-\sigma)^2,t_0)}\ \sup_{\mathcal{M}_t\cap B_{1-\sigma}(X_0)}|B|^2\right).$$
Denote $\lambda_1=|B|^{-1}\Big|_{(X_1,t_1)}$. Then
$$\sup_{t\in(t_0-(1-\f{\sigma_1}2)^2,t_0)}\ \sup_{\mathcal{M}_t\cap B_{1-\f{\sigma_1}2}(X_0)}|B|^2\le\f4{\lambda_1^2}.$$
Since
$$B_{\f{\sigma_1}2}(X_1)\times\left(t_1-\f{\sigma_1^2}4,t_1\right)\subset B_{1-\f{\sigma_1}2}(X_0)\times\left(t_0-\left(1-\f{\sigma_1}2\right)^2,t_0\right),$$
then
$$\sup_{t\in(t_1-\f{\sigma_1^2}4,t_1)}\ \sup_{\mathcal{M}_t\cap B_{\f{\sigma_1}2}(X_1)}|B|^2\le\f4{\lambda_1^2}.$$
Let $I_{X_0,t_0}$ be as in \eqref{DefIx0t0}.
It is sufficient to prove
$$\sigma_1\lambda_1^{-1}\le\left(\ep_0^{-1}I_{X_0,t_0}\right)^{\f1p}$$
for a certain uniform constant $\ep_0>0$ depending only on $n$ provided $I_{X_0,t_0}\le\ep_0$. By contradiction, we assume
$$\sigma_1\lambda_1^{-1}>\left(\ep_0^{-1}I_{X_0,t_0}\right)^{\f1p}.$$
Denote $\lambda\triangleq\lambda_1\left(\ep_0^{-1}I_{X_0,t_0}\right)^{\f1p}<\sigma_1$.

Define
$$\widetilde{M}_s=\lambda^{-1}\left(M_{\lambda^2s+t_1}-X_1\right)$$
for $s\in\left(-\f{4+t_1}{\lambda^2},\f{t_0-t_1}{\lambda^2}\right)$, where we have changed variables by setting $X=\lambda Y+X_1$ and $t=\lambda^2s+t_1$. Then $\widetilde{M}_s$ is a smooth solution of mean curvature flow satisfying
$$0\in\widetilde{M}_0,\qquad |B|\Big|_{(0,0)}=\left(\ep_0^{-1}I_{X_0,t_0}\right)^{\f1p}\le1$$
and
$$\sup_{s\in(-\f{\sigma_1^2}{4\lambda^2},0)}\sup_{\widetilde{M}_s\cap B_{\f{\sigma_1}{2\lambda}}}|B|^2\le4\left(\ep_0^{-1}I_{X_0,t_0}\right)^{\f2p}.$$
Since $\sigma_1>\lambda$, then
$$\sup_{s\in(-\f14,0)}\sup_{\widetilde{M}_s\cap B_{\f{1}{2}}}|B|^2\le4\left(\ep_0^{-1}I_{X_0,t_0}\right)^{\f2p}.$$
By scaling, it follows that
\begin{equation}\aligned
I_{X_0,t_0}=\sup_{\sqrt{-t_0}\le\r<\r'\le2}\left(\f{\lambda^2}{\r'^2-\r^2}\right)^{\f{n+2-p}2}\int_{-\f{\r'^2+t_1}{\lambda^2}}^{-\f{\r^2+t_1}{\lambda^2}}
\int_{\widetilde{M}_s\cap B_{\f2{\lambda}}\left(\f{X_0-X_1}{\lambda}\right)}|B|^p.
\endaligned
\end{equation}
Since $-1<t_0<0$ and $t_0-(1-\sigma_1)^2\le t_1\le t_0$, we choose $\r^2=-t_1$, $\r'^2-\r^2=\r'^2+t_1=2\lambda^2>0$. Noting $X_1\in\mathcal{M}_{t_1}\cap \overline{B}_{1-\sigma_1}(X_0)$, so we have
\begin{equation}\aligned
I_{X_0,t_0}\ge2^{-\f{n+2-p}2}\int_{-2}^0\int_{\widetilde{M}_s\cap B_{\f1{\lambda}}(0)}|B|^p\ge2^{-\f{n+2-p}2}\int_{-\f14}^0\int_{\widetilde{M}_s\cap B_{\f12}}|B|^p.
\endaligned
\end{equation}
Now let's recall the evolution equation for the norm of second fundamental form in \cite{X}:
\begin{equation}\aligned
\left(\f{d}{ds}-\De_{\widetilde{M}_s}\right)|B|^2=-2|\na B|^2+2|R^N|+2\sum_{\a,\be}S_{\a\be}^2\le3|B|^4.
\endaligned
\end{equation}
Since
$$\sup_{s\in(-\f14,0)}\sup_{\widetilde{M}_s\cap B_{\f{1}{2}}}|B|^2\le4\left(\ep_0^{-1}I_{X_0,t_0}\right)^{\f2p}\le4,$$
then
\begin{equation}\aligned
\left(\f{d}{ds}-\De_{\widetilde{M}_s}\right)|B|^p\le\f{3p}2|B|^{p+2}\le6p|B|^p.
\endaligned
\end{equation}
By the mean value inequality for mean curvature flow in \cite{E0}\cite{E} (where the case of submanifolds is similar to the case of hypersurfaces), there exists a constant $c(n)$ such that
\begin{equation}\aligned
|B|^p\Big|_{(0,0)}\le c(n)\int_{-\f14}^0\int_{\widetilde{M}_s\cap B_{\f12}}|B|^p,
\endaligned
\end{equation}
which implies
\begin{equation}\aligned
\ep_0^{-1}I_{X_0,t_0}\le c(n)2^{\f{n+2-p}2}I_{X_0,t_0}.
\endaligned
\end{equation}
This is impossible for the sufficiently small $\ep_0$. Hence we complete the proof of Theorem \ref{epreg}.

\bibliographystyle{amsplain}

\end{document}